\definecolor{indigo}{rgb}{0.29, 0.0, 0.51}  % custom colors
\theoremstyle{plain}
\newtheorem{theorem}{Theorem}
\newtheorem{corollary}[theorem]{Corollary}
\newtheorem{lemma}[theorem]{Lemma}
\newtheorem{question}[theorem]{Question}
\newtheorem{conjecture}[theorem]{Conjecture}
\theoremstyle{definition}
\theoremstyle{remark}
\newtheorem{remark}[theorem]{Remark}
\newtheorem{example}[theorem]{Example}
\numberwithin{theorem}{section}
\newcommand{\dfn}[1]{{\em #1}}        % definition
\newcommand{\R}{\mathbb{R}}           % the real numbers
\newcommand{\Z}{\mathbb{Z}}           % the integers
\newcommand{\Q}{\mathbb{Q}}           % the rationals
\newcommand{\N}{\mathbb{N}}           % the natural numbers
\newcommand*\bigcdot{\mathpalette\bigcdot@{0.6}}
\newcommand*\bigcdot@[2]{\mathbin{\vcenter{\hbox{\scalebox{#2}{$\m@th#1\bullet$}}}}}
\DeclareMathOperator\tb{tb}                   % Thurston-Bennequin
\DeclareMathOperator\rot{rot}                 % rotation
\DeclareMathOperator\self{sl}                 % self linking
\begin{document}

% title
\title{Neighborhoods of transverse knots and destabilizations} 

% author information
\author{John B. Etnyre}

\address{School of Mathematics \\ Georgia Institute of Technology \\  Atlanta, GA}
\email{etnyre@math.gatech.edu}

%\subjclass[2020]{57R17}

%abstract
\begin{abstract}
In this note, we show that transverse knots have unique standard neighborhoods and prove a structure theorem about non-loose Legendrian knots. We also prove a finiteness result for transverse knots in a tight contact manifold. 
The common theme of these two results is a general destabilization result for Legendrian knots. As a byproduct of this work, we find a manifold with an infinite number of distinct tight contact structures, up to contactomoprhism, with no Giroux torsion. 
\end{abstract}

\maketitle
%\tableofcontents

%%%%%%%%%%%%%%%%%%%%%%%%%%%%%%%%%%%%
\section{Introduction}
%%%%%%%%%%%%%%%%%%%%%%%%%%%%%%%%%%%%

Understanding neighborhoods of Legendrian and transverse knots and when they destabilize is essential in contact geometry. For example, a key step in most classification results for Legendrian and transverse knots is showing when they destabilize \cite{EtnyreHonda01b}, and insight into neighborhoods of knots is necessary in the study of Legendrian and transverse representatives of cable and satellite knots \cite{EtnyreHonda05, EtnyreLafountainTosun12, EtnyreVertesi18}, various constructions (such as Lutz twists \cite{Geiges08, Lutz77}), and the classification of contact structures on surgered manifolds \cite{EtnyreMinTosunVarvarezosPre}. 

In this paper, we will give conditions under which a Legendrian knot in a tight contact structure will destabilize. We also give a criterion for non-loose Legendrian knots in an overtwisted contact structure to ``destablize to infinity''. We also show that there are only finitely many transverse knots in a tight contact manifold with a fixed self-linking number and smooth knot type. We then turn to the question of when a transverse knot has a unique standard neighborhood, and show that sufficiently negative neighborhoods are unique, but in general, standard neighborhoods are not. Finally, out of this study, we show that there are manifolds with boundary that support an infinite number of distinct tight contact structures, up to contactomoprhism, with no Giroux torsion. This is somewhat unexpected, given that \cite{ColinGirouxHonda09} shows this cannot happen for manifolds with convex boundary. 

%In this note we show that transverse knots have unique standard neighborhoods and prove a structure theorem about non-loose Legendrian knots. The common theme of these two results is a general destabilization result for Legendrian knots. As a byproduct of this work, we find a manifold with an infinite number of distinct tight contact structures, up to contactomoprhism, with no Giroux torsion. 

%%%%%%%----------------------------------------------------------------------------
\subsection{Destablizing Legendrian knots}\label{destabsection}
%%%%%%%----------------------------------------------------------------------------

Determining when a Legendrian knot destabilizes is a key to classifying Legendrian representatives of a given smooth knot type \cite{EtnyreHonda01b}, it can also be useful in understanding fillings of a contact manifold obtained by surgery on a Legendrian knot \cite{ChristianMenkePre}. The following conjecture would go a long way towards a general understanding of Legendrian knots. 
\begin{conjecture}\label{co$nj}
Given a null-homologous smooth knot type $K$ in a tight contact manifold $(M,\xi)$, there is an integer $n$ such that any Legendrian representative of $K$ with Thurston-Bennequin invariant less than $n$ destabilizes. 
\end{conjecture}
If this conjecture were true, then we would know that Legendrian representatives of $K$ are ``finitely generated''. By this, we mean that there would be a finite number of distinct non-destabilized Legendrian representatives of $K$, and all other Legendrian representatives are stabilizations of these. This follows from the fact that there are finitely many Legendrian representatives of $K$ with a fixed Thurston-Bennequin invariant and rotation number \cite{ColinGirouxHonda09}, and given the conjecture and the Bennequin inequality, there would only be a finite number of possible Thurston-Bennequin invariants and rotation numbers that could be realized by non-destabilizable Legendrian knots. 

While we cannot prove the above conjecture, we make partial progress by giving a general criterion that implies a Legendrian knot must destabilize. 
\begin{theorem}\label{maindestab}
Let $L$ be a Legendrian knot in a contact manifold $(M,\xi)$. If $\Sigma$ is a surface with $\partial \Sigma=L$, $\tb(L)<-1$, and $\pm \rot(L)<\chi(\Sigma)$, then $\Sigma$ may be isotoped, rel $L$, so that there is a $\pm$-bypass for $L$ on $\Sigma$. In particular, $L$ destabilizes. 
\end{theorem}
We will use this proposition to prove the main results about neighborhoods of transverse knots discussed below.

%%%%%%%----------------------------------------------------------------------------
\subsection{Destabilizing non-loose Legendrian knots}
%%%%%%%----------------------------------------------------------------------------
Recall a Legendrian knot $L$ in an overtwisted contact manifold is called \dfn{loose} if the contact structure restricted to the complement of a standard neighborhood of $L$ is also overtwisted. Otherwise, we call the knot $L$ \dfn{non-loose}. Loose knots are fairly well understood, see \cite{Etnyre13}, but we currently know very few general results about the structure of non-loose Legendrian knots in overtwisted contact manifolds. One such result is a generalization of the Bennequin bounds. Specifically, if $L$ is a non-loose null-homologous knot in an overtwisted contact manifold then 
\[
-|\tb(L)|+|\rot(L)|\leq -\chi(L),
\]
where $\tb(L)$ is the Thurston-Bennequin invariant of $L$, $\rot(L)$ is the rotation number of $L$, and $\chi(L)$ is the maximal Euler characteristic of a surface with boundary $L$. See \cite{Swiatkowski92}.

In all known examples where we have a complete classification of non-loose knots, see \cite{EliashbergFraser09, EtnyreMinMukherjee22Pre}, we always see that some non-loose representatives ``destabilize to infinity'', that is, there are non-loose knots that can destabilize infinitely often. So we ask the following question.
\begin{question}
If $K$ is a null-homologous knot type in a manifold $M$ and $K$ admits non-loose Legendrian representatives in a contact structure $\xi$ on $M$, then does some non-loose Legendrian representative of $K$ destabilize infinitely many times? 
\end{question}

While we cannot currently answer this question, we can give strong evidence that the answer is ``yes'' with the following theorem that gives a criterion for a non-loose knot to destabilize to infinity. 

\begin{theorem}\label{mainnonloose}
Let $L$ be a non-loose null-homologous Legendrian knot in an overtwisted contact manifold $(M,\xi)$. If $\pm \rot(L)<\chi(L)$ and $\tb(L)>0$, then there are non-loose knots $L_i$, for $i\in \N$, such that $L_0=L$ and $L_{i-1}=S_\mp(L_i)$. 
\end{theorem}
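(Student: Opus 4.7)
The plan is to build the tower $\{L_i\}$ by repeated anti-stabilization, with each step controlled by the general destabilization result for Legendrian knots promised in the introduction. By the symmetry exchanging $\rot\leftrightarrow-\rot$ and $S_+\leftrightarrow S_-$, I may assume $\rot(L)<\chi(L)$ and aim to produce $L_i$ with $L_{i-1}=S_-(L_i)$; thus $\tb(L_i)=\tb(L)+i$ and $\rot(L_i)=\rot(L)+i$.

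My first step is to reinterpret the problem in terms of transverse push-offs. Because $S_-$ preserves the positive transverse push-off, any candidate tower must share a single positive transverse push-off $T=T_+(L)$, which is non-loose (since $L$ is) with $\self(T)=\tb(L)-\rot(L)$. Thus constructing $\{L_i\}$ amounts to producing infinitely many non-loose Legendrian approximations of $T$ with Thurston--Bennequin invariants $\tb(L)+i$. I would build these inductively: given $L_i$, its standard neighborhood $N(L_i)$ has convex boundary with dividing slope $\tb(L_i)>0$ in the Seifert framing, so I can enlarge $N(L_i)$ across a single bypass layer to a convex solid torus of dividing slope one larger, and take $L_{i+1}$ to be a Legendrian divide of the new boundary. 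The hypothesis $\rot(L)<\chi(L)$, together with the generalized Bennequin inequality applied to $L_{i+1}$, pins down the sign of the attached bypass so that $L_i=S_-(L_{i+1})$.

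The main obstacle is verifying non-looseness of $L_{i+1}$. The complement of $N(L_{i+1})$ is strictly smaller than that of $N(L_i)$, so a new overtwisted disk could in principle be created inside the bypass layer added at this step. This is exactly the scenario ruled out by the general destabilization lemma: provided $\rot(L)+\chi(L)\leq \tb(L)$, which follows from $\rot(L)<\chi(L)\leq 1$ and $\tb(L)\geq 1$, the bypass attachment must be a genuine thickening rather than an overtwisted cobordism, so non-looseness propagates from $L_i$ to $L_{i+1}$. Iterating yields the required infinite tower, and by Lemma~\ref{part2} the $L_i$'s are unambiguously identified as successive Legendrian approximations of $T$.
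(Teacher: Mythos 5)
Your overall picture---thicken the standard neighborhood $N(L)$ through convex tori of increasing integer dividing slope and read off the $L_i$ as the cores of the intermediate solid tori---is the right one, and it is the paper's strategy. But the engine of the proof is missing. The sentence ``its standard neighborhood $N(L_i)$ has convex boundary with dividing slope $\tb(L_i)>0$\dots, \emph{so} I can enlarge $N(L_i)$ across a single bypass layer'' asserts exactly the thing that has to be proved: positivity of the dividing slope does not imply that a solid torus thickens (the non-thickenable tori $S_n^\pm$ in the trefoil complement, used in Theorem~\ref{ex4}, are counterexamples to that reflex). Moreover Proposition~\ref{maindestab} cannot be applied to $L$ itself, since $\tb(L)>0$. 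The key idea of the paper's proof, absent from your proposal, is to put $\partial N_L$ in standard form with \emph{ruling curves of slope $0$}: such a ruling curve bounds a Seifert surface, is a stabilization of $L$ with the same rotation number and with $\tb=-\tb(L)<0$, so Proposition~\ref{maindestab} applies to \emph{it} and produces a bypass for $\partial N_L$. Note also that attaching that bypass (along a slope-$0$ ruling curve on a slope-$n$ torus) yields dividing slope $\infty$, not $n+1$; one then finds all the tori of integer slope $i>n$ \emph{inside} the resulting basic slice, rather than producing them one bypass at a time. Your ``one slope unit per bypass'' induction has no mechanism behind it, and even granting it, your sign conventions make $\rot(L_i)$ drift out of the region where any destabilization criterion could be invoked at later stages.

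Two further points. First, the sign of the stabilization is not pinned down by the generalized Bennequin inequality: for, say, $\chi=-1$, $\tb(L)=1$, $\rot(L)=2$, both candidate rotation numbers $1$ and $3$ for $L_1$ satisfy $-|\tb|+|\rot|\le-\chi$. In the paper the sign comes from the sign of the bypass supplied by Proposition~\ref{maindestab} (hence the sign of the basic slices $N_{i+1}\setminus N_i$), which is determined by which of the two rotation-number hypotheses holds. Second, your worry about non-looseness is inverted: since $N(L_i)\subset N(L_{i+1})$, the complement of $N(L_{i+1})$ is a subset of the complement of $N(L_i)$, and a subset of a tight contact manifold is tight; no lemma is needed, and the ``overtwisted cobordism'' scenario you invoke the destabilization lemma to exclude is not something that lemma addresses.
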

Pictorially, this theorem says that if there is a non-loose Legendrian knot with rotation number and Thurston-Bennequin invariant in the shaded region of Figure~\ref{destab}, 
\begin{figure}[htb]{
\begin{overpic}%[grid,tics=10] 
{}
%\put(60, -5){$\Sigma_0$}
\end{overpic}}
\caption{The shaded region on the left is the region satisfying $-|\tb(L)|+|\rot(L)|\leq -\chi(L)$. The darker shaded region in which Theorem~\ref{mainnonloose} implies that Legendrian knots must destabilize. Specifically, if a Legendrian knot has invariants as indicated by the dot on the left-hand side diagram, then it negatively destabilizes to infinity as indicated in the diagram on the right-hand side. If the dot is in the left darkly shaded region, then it will positively destabilize to infinity; that is, we see the right-hand diagram reflected about the vertical axis.}
\label{destab}
\end{figure}
then one must also have non-loose representatives as shown on the right of Figure~\ref{destab}.

%%%%%%%----------------------------------------------------------------------------
\subsection{Transverse knots with fixed self-linking number}
%%%%%%%----------------------------------------------------------------------------
Using our understanding of when Legendrian knots in tight contact destabilize, we can establish the transverse analog of the result of Colin, Giroux, and Honda mentioned above, that there are finitely many Legendrian knots with a fixed Thurston-Bennequin invariant and rotation number. 
\begin{theorem}\label{detablizetransverse}
Given a null-homologous smooth knot type $K$ in a tight contact manifold $(M,\xi)$, there are only finitely many transverse representatives of $K$ with a fixed self-linking number. 
\end{theorem}
Surprisingly, the analogous result for Legendrian knots has been known for over 17 years due to the work of Colin, Giroux, and Honda in \cite{ColinGirouxHonda09}, while the transverse version has remained open. Shortly after that work, the transverse finiteness result was first considered in the thesis of Guyard \cite{Guyard15}, where it was reduced to a statement about non-destablizable Legendrian knots, and then some progress on that statement was made. Our approach to this theorem is to reduce it to the weaker statement about destabilizing Legendrian knots that is established in Theorem~\ref{maindestab}.

%%%%%%%----------------------------------------------------------------------------
\subsection{Neighborhoods of transverse knots}
%%%%%%%----------------------------------------------------------------------------
We begin by discussing neighborhoods of transverse knots. We recall that a transverse knot $T$ in a contact $3$-manifold $(M,\xi)$ has a neighborhood $N_s$ that is contactomoprhic to a standard model $\widehat N_s$ for some $s\in \Q$. We will discuss the standard model in Section~\ref{nbhds}, but $\widehat N_s$ is a solid torus with a fixed contact structure, and the characteristic foliation on $\partial \widehat N_s$ is linear with slope $s$. Moreover, if $T$ has a neighborhood $N_s$ then it has a neighborhood $N_{s'}$ for any $s'\leq s$.\footnote{The issue of framing on $T$ is important when discussing slopes. We discuss this more thoroughly in Section~\ref{nbhds}, but for now we note that we fix a framing of $T$ and use it whenever discussing slopes on the boundary of a neighborhood of $T$.} A basic question is whether or not $N_s$ is well-defined up to ambient contact isotopy. Clearly, two neighborhoods $N_s$ and $N_{s'}$ of $T$ cannot be contact isotopic if $s\not=s'$ as the characteristic foliation is preserved under contact isotopy. So we frame this basic question as follows: if $T$ and $T'$ are transversely isotopic transverse knots and $N_s$ and $N'_s$ are their standard neighborhoods, then is there a contact isotopy of $(M,\xi)$ that takes $N_s$ to $N'_s$?

We note that the question asked above is not only a basic question, but it is fundamental to many constructions. For example, when one defines a Lutz twist on $T$, \cite{Geiges08}, one starts with a standard neighborhood $N_s$ of $T$, removes it and glues in a new solid torus with another contact structure. So if $N_s$ is not well-defined, then a Lutz twist depends on the choice of $N_s$ and not just on $T$. %(We note there are ways to define the Lutz twist that avoids this problem, but the above description is the simplest way to define a Lutz twist.) 
As Lutz twists are a fundamental construction in contact geometry, it would be helpful to know if they only depend on the transverse knot $T$ or on the choice of the neighborhood of $T$. 

Similarly, in \cite{Gay02a}, admissible transverse surgery on a transverse knot was defined. It again starts with a neighborhood $N_s$ of a transverse knot $T$ and performs surgery on $N_s$. So again, to see that admissible transverse surgery is well-defined, we need to see that $N_s$ depends only on $T$ (and $s$). 

In the literature, it seems that the well-definiteness of $N_s$ has largely been assumed \cite{Gay02a, Geiges08}, but some experts have known for some time that $N_s$ is not well-defined for all $T$ and all $s$. We discuss this more below, but we first note that as long as $s$ is sufficiently negative, we do have a well-defined neighborhood of $T$.

\begin{theorem}\label{nbhdoftransverse}
Let $(M,\xi)$ be a tight contact $3$-manifold. 
Given a null-homologous knot type $K$ in a manifold $M$ and an integer $n$, there is a rational number $r$ such that any standard neighborhood with boundary slope $s<r$ of any transverse knot $T$ in the knot type $K$ with $\self(T)\geq n$ is unique up to contact isotopy. 
\end{theorem}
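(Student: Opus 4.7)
My approach is to translate the statement about transverse knot neighborhoods into a Legendrian isotopy question, and then invoke a uniform destabilization result for Legendrian approximations, which the abstract advertises as the paper's central technical tool.

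\emph{Step 1: Neighborhoods $\leftrightarrow$ Legendrian approximations.} For each large positive integer $k$ there is a rational slope $s_k$ such that the standard model $\widehat{N}_{s_k}$ is contactomorphic to the standard contact neighborhood of a Legendrian knot of Thurston--Bennequin invariant $-k$, with $s_k$ monotonically tending to the ``extreme'' endpoint as $k\to\infty$. Consequently, any standard neighborhood $N_{s_k}$ of a transverse knot $T$ in $(M,\xi)$ is, up to ambient contact isotopy, the standard contact neighborhood of a Legendrian approximation $L\subset N_{s_k}$ of $T$ with $\tb(L)=-k$, and its ambient contact isotopy class is completely captured by the Legendrian isotopy class of $L$. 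Because $N_s\supset N_{s'}$ whenever $s'\le s$ (and each such inclusion is an ambient contact isotopy of neighborhoods), it suffices to establish uniqueness along the discrete sequence $\{s_k\}$.

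\emph{Step 2: Uniform destabilization.} The core step is to prove: there exists an integer $N=N(K,n)$ such that for any two transversely isotopic transverse knots $T,T'$ in knot type $K$ with $\self\ge n$, and any Legendrian approximations $L$ of $T$ and $L'$ of $T'$ with $\tb(L)=\tb(L')\le -N$, the knots $L$ and $L'$ are Legendrian isotopic in $(M,\xi)$. The Fuchs--Tabachnikov/Epstein--Fuchs--Meyer theorem supplies the nonuniform version: \emph{some} number of negative stabilizations of $L$ and $L'$ become Legendrian isotopic. The new content is uniformity in $T,T'$, and this is where I expect the general destabilization theorem of the paper to intervene. Morally, once $\tb(L)$ is negative enough relative to the bound on $\rot(L)=\tb(L)-\self(T)$ forced by $\self\ge n$, $L$ must be a negative stabilization of a canonical maximal Legendrian representative, so all sufficiently negatively stabilized approximations with a fixed $\tb$ coincide up to Legendrian isotopy.

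\emph{Step 3: Conclusion.} Define $r$ so that $s<r$ forces the associated $t_k\le -N$. Given transversely isotopic $T,T'$ with $\self\ge n$ and standard neighborhoods $N_s$ and $N'_s$ of slope $s<r$, shrink to $N_{s_k}\subset N_s$ and $N'_{s_k}\subset N'_s$ for some $k\ge N$. By Step 1 these are the standard contact neighborhoods of Legendrian approximations $L$ and $L'$; by Step 2 there is a Legendrian isotopy from $L$ to $L'$, which extends to an ambient contact isotopy of $(M,\xi)$ taking $N_{s_k}$ to $N'_{s_k}$. Composing with the contact isotopies that enlarge these back to $N_s,N'_s$ completes the proof.

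The main obstacle is the uniform destabilization in Step 2. Fuchs--Tabachnikov yields a stabilization count that depends on the specific transverse isotopy, whereas the theorem demands a bound depending only on $(K,n)$. Producing such a bound requires a rigidity statement about Legendrian representatives in $(M,\xi)$: once the Thurston--Bennequin invariant is sufficiently negative, Legendrian approximations of transversely isotopic knots must agree up to Legendrian isotopy. This rigidity is precisely the role played by the general destabilization theorem announced in the abstract.
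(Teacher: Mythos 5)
There is a genuine gap, and it lies in the direction of your containments. In Steps~1 and~3 you shrink $N_s$ to a sub-neighborhood $N_{s_k}$ that is (after perturbing its boundary to be convex) the standard neighborhood of a Legendrian approximation $L$, prove uniqueness of $N_{s_k}$, and then assert that this ``extends back'' to uniqueness of $N_s$. That implication is false: uniqueness of smaller neighborhoods does not propagate outward to larger ones. The paper's own Theorem~\ref{ex4} is a counterexample to your reduction: for the transverse right-handed trefoil with $\self=1$, every $N_s$ with $s<0$ is unique up to contact isotopy, yet $N_0$ admits infinitely many non-isotopic representatives, each of which of course contains those unique smaller neighborhoods. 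The correct move, and the real geometric content of the proof, goes the other way: one must \emph{thicken} $N_s$ to a canonical standard neighborhood of a Legendrian approximation with $\tb=\lceil s\rceil$, i.e.\ with dividing slope \emph{above} $s$. This is done (Lemma~\ref{part1}) by putting a slightly enlarged convex torus in standard form with ruling curves of slope $0$, estimating the rotation number of a ruling curve via the sign of the basic slices inside a transverse neighborhood, and then applying Proposition~\ref{maindestab} to a Seifert surface of the ruling curve to find bypasses that increase the dividing slope to an integer. This step is where the genus of $K$ enters the definition of $r$ (the bound $s<\min\{m,\,-2g(K)+1+n\}$), which your proposal never produces. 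Having embedded $N_s$ and $N'_s$ into standard neighborhoods of Legendrian isotopic approximations, one still needs Lemma~\ref{mainhelper}: the classification of tight contact structures on solid tori places $N_s$ and $N'_s$ in the same position up to a contactomorphism of the Legendrian neighborhood, and Lemma~\ref{lem1} upgrades that contactomorphism to a contact isotopy. Your argument, which only transports the small torus $N_{s_k}$, gives no control over where the rest of $N_s$ goes.

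Your Step~2 correctly identifies the other half of the proof (it is Lemma~\ref{part2}) and correctly names Proposition~\ref{maindestab} as the engine, but the claim that a sufficiently stabilized approximation ``must be a negative stabilization of a canonical maximal Legendrian representative'' is stronger than what is true or needed. The paper's argument destabilizes all approximations down to a common pair of classical invariants $(\tb,\rot)=(b+n,b)$, invokes the Colin--Giroux--Honda finiteness theorem to conclude there are only finitely many Legendrian knots with those invariants, and then observes that any two which remain distinct under arbitrarily many negative stabilizations have distinct transverse push-offs, so they cannot both approximate $T$. Without the finiteness input, uniformity in $T$ does not follow from destabilization alone.
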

\begin{remark}
Our proof will show that the same theorem is true in overtwisted contact manifolds as long as we only consider neighborhoods of non-loose knots. Surprisingly, it is not clear if the result is true for loose knots in overtisted contact manifolds (thought it is coarsely true, in the sense that the neighborhoods will be unique up to a contactomorphism that is smoothly isotopic to the identity). 
\end{remark}

This theorem is sufficient to see that Lutz twists on $T$ only depend on $T$, admissible surgery on $T$ only depends on $T$ for sufficiently negative surgery slopes, and inadmissible surgery, \cite{Conway19}, is well-defined for any surgery slope. 

We can estimate $r$ in the theorem above in some cases. To state this estimate we recall that a knot type $K$ is called \dfn{Legendrian simple} if Legendrian knots smoothly isotopic to $K$ are determined by their Thurston-Bennequin invariant and rotation number. 
\begin{theorem}\label{main2}
If $K$ is a Legendrian simple knot in a tight contact manifold $(M,\xi)$ and $K=\partial \Sigma$ for some embedded surface $\Sigma$, then a standard neighborhood $N_s$ of a transverse knot $T$ in the knot type $K$ is unique up to contact isotopy if $s<\self(T)+\chi(\Sigma)$, where $\chi(\Sigma)$ is the Euler characteristic of $\Sigma$ and $\self(T)$ is the self-linking number of $T$. 
\end{theorem}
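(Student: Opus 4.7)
The plan is to transfer the uniqueness question from transverse knots to their Legendrian approximations, where the hypothesis of Legendrian simplicity directly applies.

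Suppose $N_s$ and $N'_s$ are standard neighborhoods of transversely isotopic transverse knots $T$ and $T'$. After applying an ambient contact isotopy realizing the transverse isotopy, we may assume $T=T'$. Next, we perturb $\partial N_s$ and $\partial N'_s$ to be convex, and inside each neighborhood we choose a Legendrian approximation of $T$, say $L\subset N_s$ and $L'\subset N'_s$. Stabilizing as needed and using the standard relation between $\self(T)$ and the classical invariants of a Legendrian approximation, we arrange that $L$ and $L'$ have the same $\tb$ and $\rot$. Since $K$ is Legendrian simple and $L,L'$ are smoothly isotopic representatives of $K$ with matching $(\tb,\rot)$, they are Legendrian isotopic. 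Extending this isotopy ambiently, we may further assume $L=L'$. The uniqueness of standard Darboux--Weinstein neighborhoods of a Legendrian knot then lets us place a common small convex neighborhood $\nu(L)$ inside both $N_s$ and $N'_s$.

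The problem reduces to showing the two ``buffer regions'' $N_s\setminus\nu(L)$ and $N'_s\setminus\nu(L)$, each a tight contact structure on $T^2\times I$ with identical boundary dividing slopes, are contact isotopic rel boundary. By Honda's classification of tight contact structures on $T^2\times I$, such a structure is determined up to isotopy by a sign sequence attached to the basic slices along a path in the Farey tessellation from the inner slope (determined by $\tb(L)$) to the outer slope $s$. Thus the goal becomes: under the hypothesis $s<\self(T)+\chi(\Sigma)$, this sign sequence is dictated by $T$.

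The main obstacle is proving this last claim, via a destabilization argument. If two buffer regions with different sign sequences both occurred as collars of $T$, then a Legendrian divide in a basic slice whose sign disagrees with the outer layer could be joined to $L$ to produce a Legendrian approximation $\widetilde L$ of $T$ with $\tb(\widetilde L)+|\rot(\widetilde L)|>-\chi(\Sigma)$, violating the Bennequin inequality. Rewriting the slope bound $s<\self(T)+\chi(\Sigma)$ using $\self(T)=\tb(L)\pm\rot(L)$ recovers precisely the arithmetic condition that makes this destabilization succeed at every basic slice in the buffer. With the sign sequences forced to coincide, the two buffers are contact isotopic rel boundary, and the isotopy extends across $\nu(L)$ to carry $N_s$ to $N'_s$.
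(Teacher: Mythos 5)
Your opening moves (reduce to $T=T'$, pick Legendrian approximations with matching classical invariants, use Legendrian simplicity to identify them, and fix a common neighborhood $\nu(L)$ inside both $N_s$ and $N'_s$) are fine, but the reduction you then make --- that it suffices to show the two buffer regions $N_s\setminus\nu(L)$ and $N'_s\setminus\nu(L)$ carry isotopic tight contact structures on $T^2\times I$ --- does not prove the theorem, and the sign-sequence argument is aimed at a non-issue. Both buffers lie inside standard neighborhoods of the transverse knot $T$, where every basic slice is automatically negative; so their abstract contact structures agree for free, with no use of the hypothesis $s<\self(T)+\chi(\Sigma)$. What is actually at stake is the embedding of $N_s$ in $M$: two solid tori containing the same $\nu(L)$, with the same boundary slope and abstractly contactomorphic buffers, need not be ambient contact isotopic, because the classification of tight structures on $T^2\times I$ produces an isotopy inside an abstract thickened torus, not an ambient isotopy of $M$. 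Theorem~\ref{ex4} is exactly a counterexample to your reduction: the neighborhoods $N_0^n$ of the self-linking-one transverse trefoil all have boundary slope $0$, all contain a common Legendrian approximation once $\tb$ is sufficiently negative, and all have identical all-negative buffers, yet they are pairwise non-isotopic because their complements $C_n'$ are not contactomorphic. The property you reduce to holds for all of them, so it cannot imply uniqueness.

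The missing idea is to go outward rather than inward. The paper first shows (the last two paragraphs of the proof of Lemma~\ref{part1}, using Proposition~\ref{maindestab} applied to the Seifert surface of a slope-$0$ ruling curve on a convex torus just outside $N_s$) that the hypothesis $s<\self(T)+\chi(\Sigma)$ lets one attach bypasses to thicken $N_s$ to a standard neighborhood $N_L$ of a Legendrian approximation $L$ with $\tb(L)=\self(T)+\chi(\Sigma)$ and $\rot(L)=\chi(\Sigma)$ --- this is where your Bennequin/destabilization arithmetic genuinely belongs. Legendrian simplicity then pins down $L$, Lemma~\ref{uniqueLegneigh} pins down $N_L$ up to ambient contact isotopy, and finally the classification of tight structures on the solid torus together with Lemma~\ref{lem1} (a contactomorphism of the standard solid torus is contact isotopic to the identity, hence extends by the identity to $M$) shows that the sub-torus $N_s\subset N_L$ is determined up to ambient contact isotopy by its boundary slope. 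That last step is what converts an abstract contactomorphism into an ambient contact isotopy, and it is available only because the ambient solid torus $N_L$ is canonical; no analogous statement holds for the complement of your inner neighborhood $\nu(L)$.
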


We would now like to address when $N_s$ is well-defined and when it is not, as well as the range of $s$ for which a given transverse knot $T$ has a neighborhood $N_s$. To this end, we recall that a smooth knot type $K$ is \dfn{uniformly thick} if any solid torus $S$ in the knot type $K$ is contained in a solid torus $S'$ that is a standard neighborhood of a Legendrian knot with maximal Thurston-Bennequin invariant.

\begin{theorem}\label{main3}
If $K$ is Legendrian simple and uniformly thick in a tight contact manifold, then any standard neighborhood of a transverse representative of $K$ is unique up to contact isotopy. 
\end{theorem}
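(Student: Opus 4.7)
The plan is to bootstrap Theorem~\ref{main2} from the range $s < \self(T) + \chi(\Sigma)$ to all slopes, using uniform thickness to produce a canonical maximal envelope and then appealing to Legendrian simplicity. Let $T$ and $T'$ be transversely isotopic representatives of $K$ with standard neighborhoods $N_s$ and $N'_s$, and set $n=\self(T)$. First I would apply the given transverse isotopy to reduce to the case $T=T'$. If $s < n + \chi(\Sigma)$, the conclusion is immediate from Theorem~\ref{main2}, so I would focus on the range $s \ge n + \chi(\Sigma)$.

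For such an $s$, I would use uniform thickness to produce solid tori $\widetilde N \supset N_s$ and $\widetilde N' \supset N'_s$ that are standard neighborhoods of Legendrian knots $L, L'$ with $\tb(L) = \tb(L') = \tbb(K)$; both tori have the same boundary slope (determined by $\tbb(K)$ in the given framing), and each contains $T$ as a core-parallel transverse knot of self-linking $n$. Since $K$ is Legendrian simple, $L$ and $L'$ are classified up to Legendrian isotopy by their rotation numbers. I would then argue that the rotation number of the enveloping Legendrian is canonically determined by $T$: taking the maximal-$\tb$ Legendrian approximation $L_T$ of $T$ inside $N_s$, which satisfies $\tb(L_T) - \rot(L_T) = n$, and unstabilizing inside $\widetilde N$ until $\tb$ reaches $\tbb(K)$, the combinatorics of the unstabilizations fix $\rot(L)$ in terms of $T$ alone. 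The same reasoning applied to $N'_s$ forces $\rot(L') = \rot(L)$, and Legendrian simplicity then gives that $L$ and $L'$ are Legendrian isotopic, so $\widetilde N$ is ambient contact isotopic to $\widetilde N'$. After this isotopy I may assume $\widetilde N = \widetilde N'$.

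It then remains to show that inside a fixed standard contact solid torus $\widetilde N$, two standard neighborhoods $N_s$ and $N'_s$ of the same core-parallel transverse knot $T$ with the same boundary slope are contact isotopic rel $T$. For this I would shrink both to sub-neighborhoods $N_{s^*}$ with $s^* < n + \chi(\Sigma)$, identify them via Theorem~\ref{main2}, and then analyze the tight contact $T^2 \times I$ shells $N_s \setminus \interior N_{s^*}$ and $N'_s \setminus \interior N_{s^*}$: sitting inside the standard model $\widetilde N$ forces these shells to be the minimally twisting tight contact structures on $T^2 \times I$ with the prescribed boundary slopes, so Honda's classification yields a contact isotopy rel inner boundary. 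The hard part will be the rotation-number bookkeeping in the preceding step: when $\tbb(K) - n$ is not itself the rotation number of any max-$\tb$ representative of $K$, the envelope $L$ is produced only after a nontrivial combination of positive and negative unstabilizations, and one must verify that the net signed count of these unstabilizations -- and hence $\rot(L)$ -- is genuinely an invariant of $T$ rather than an artifact of the chosen $N_s$. It is precisely here that Legendrian simplicity and the explicit contact geometry of the standard solid torus are indispensable.
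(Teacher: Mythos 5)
Your overall strategy coincides with the paper's: use uniform thickness to embed both neighborhoods in standard neighborhoods of maximal-$\tb$ Legendrian representatives, use Legendrian simplicity to identify those envelopes, and then match the two transverse neighborhoods inside a single fixed envelope via the classification of tight contact structures on solid tori (the paper packages all of this as Lemma~\ref{mainhelper}, proved via Lemma~\ref{uniqueLegneigh} and Lemma~\ref{lem1}). The problem is that you explicitly leave open the step you yourself identify as the crux --- that the rotation number of the enveloping Legendrian $L$ is determined by $T$ rather than by the choice of thickening --- and without it the appeal to Legendrian simplicity does not close. The paper's resolution is to demand that the envelope be a standard neighborhood of a \emph{Legendrian approximation} of $T$, i.e.\ a Legendrian whose transverse push-off is $T$; for such an $L$ one has $\rot(L)=\tb(L)-\self(T)=\tbb(K)-\self(T)$ on the nose, so there is no ``net signed count of unstabilizations'' to track. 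Equivalently, the thickening from $N_s$ out to an integer-sloped convex torus is achieved through negative bypasses only (this is exactly what the rotation-number computation in the proof of Lemma~\ref{part1} delivers), so all intermediate Legendrians are negative stabilizations of $L$ and share the transverse push-off $T$. This is the content you would need to supply to make your second paragraph an argument rather than a hope.

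Two smaller points. Your claim that sitting inside $\widetilde N$ forces the shell $N_s\setminus \interior N_{s^*}$ to be ``the'' minimally twisting tight structure with given boundary slopes is not correct as stated: minimal twisting plus boundary slopes does not determine a tight $T^2\times I$, since the signs of the basic slices are additional data. What saves the argument is that these shells lie inside standard neighborhoods of transverse knots, so every basic slice in them is negative --- the fact the paper invokes in Lemma~\ref{part1}. Relatedly, identifying the shells as abstract contact manifolds is not yet an \emph{ambient} contact isotopy of $M$ carrying $N_s$ to $N'_s$; the paper gets that by producing a contactomorphism of the envelope rel boundary and then citing Lemma~\ref{lem1}, which says such a contactomorphism of the standard solid torus is contact isotopic to the identity. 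Finally, your case split at $s<\self(T)+\chi(\Sigma)$ is harmless but unnecessary: the envelope argument is uniform in $s$.
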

In this situation, we can also determine for which $s$ we can find a standard neighborhood $N_s$ of $T$ in terms of the classification of Legendrian knots isotopic to $K$. In Remark~\ref{thealgorithm}, we will give an algorithm to determine these values of $s$, but we state here a few examples. 
\begin{theorem}\label{ex1}
Consider the $(2,-2n-1)$-torus knot $K$ in $(S^3,\xi_{std})$, where $\xi_{std}$ is the unique tight contact structure on $S^3$. The maximal self-linking number of $K$ is $-2n-3$. Let $T$ be a transverse representative of $K$. If 
\[
\self(K)\in\{-2n-3, -2n-5, \ldots,-6n-1\},
\] 
then $T$ has a standard neighborhood $N_s$ if and only if $s<-4n-2$ and if 
\[
\self(T)=-2k-1 \text{ for } k>3n,
\] 
then $T$ has a standard neighborhood $N_s$ if and only if $s<-k-n-2$. 

Moreover, any standard neighborhood of any transverse representative of $K$ is determined up to contact isotopy by the slope on its boundary. 
\end{theorem}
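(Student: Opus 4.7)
The plan is to combine the Etnyre--Honda classification of Legendrian $(2,-2n-1)$-torus knots with the general machinery developed earlier in this paper, namely Theorem~\ref{main3} and the algorithm indicated in Remark~\ref{thealgorithm}. I would first recall that the $(2,-2n-1)$-torus knot $K$ is Legendrian simple with $\tb_{\max}=-4n-2$, that the Legendrian representatives achieving $\tb_{\max}$ have rotation numbers exactly $\{-(2n-1),-(2n-3),\ldots,2n-1\}$, and that the full mountain range is generated from these by stabilization. I would also recall that a positive stabilization sends $(\tb,\rot)\mapsto(\tb-1,\rot+1)$ and decreases the self-linking of the positive transverse push-off by $2$, while a negative stabilization sends $(\tb,\rot)\mapsto(\tb-1,\rot-1)$ and preserves it.

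Next, I would invoke the general principle made precise in Remark~\ref{thealgorithm}: the supremum of slopes $s$ for which a standard neighborhood $N_s$ of a transverse representative $T$ exists equals $\sup\{\tb(L):L\text{ is a Legendrian approximation of }T\}$, and $N_s$ exists for every $s$ strictly below this supremum. I would then compute this supremum for each value of $\self(T)$ by maximizing $\tb(L)$ subject to $\tb(L)-\rot(L)=\self(T)$ within the mountain range. Writing a general Legendrian as the result of $p$ positive and $q$ negative stabilizations of a $\tb_{\max}$ representative with rotation $r_0$ yields $\tb=-4n-2-p-q$ and push-off self-linking $-4n-2-r_0-2p$, subject to $r_0\in\{-(2n-1),\ldots,2n-1\}$ and $p,q\ge 0$. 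For $\self(T)\in\{-2n-3,\ldots,-6n-1\}$ one takes $p=q=0$ and $r_0=-4n-2-\self(T)$, giving $\sup\tb=-4n-2$, so $N_s$ exists iff $s<-4n-2$. For $\self(T)=-2k-1$ with $k>3n$ no $\tb_{\max}$ Legendrian realizes the prescribed self-linking, so $p>0$ is forced; minimizing $p+q$ with $q\ge 0$ selects the optimum $r_0=2n-1$, $p=k-3n$, $q=0$, yielding $\sup\tb=-n-k-2$ and $N_s$ exists iff $s<-n-k-2$.

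For the final statement that standard neighborhoods are determined up to contact isotopy by the boundary slope, I would apply Theorem~\ref{main3} after verifying that the $(2,-2n-1)$-torus knot is uniformly thick. Uniform thickness would be established by a convex-surface analysis of the knot complement, which is a Seifert fibered space over the disk with two exceptional fibers, showing that any embedded solid torus in this knot type can be thickened to a standard Legendrian neighborhood of a $\tb_{\max}$ representative via Honda-style bypass and gluing arguments.

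The main obstacle is the verification of uniform thickness: ruling out exotic non-thickenable convex tori in the Seifert fibered complement requires a careful case analysis using the classification of tight contact structures on Seifert fibered pieces, and is the least mechanical part of the argument. Once this is in hand, the computation of $\sup\tb$ for each $\self(T)$ is a direct consequence of the Legendrian classification and the stabilization formulas above.
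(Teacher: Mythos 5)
Your computation of the thresholds from the mountain range is correct and the existence half of the argument (standard sub-neighborhoods of a Legendrian approximation with maximal $\tb$ give $N_s$ for every $s$ below the threshold, exactly as in Lemma~\ref{subtori} and Remark~\ref{thealgorithm}) matches the paper. But there is a genuine gap in the ``only if'' direction. Remark~\ref{thealgorithm} asserts only that $N_s$ \emph{exists} for $s<t_T$; it does not assert that no $N_s$ exists for $s\geq t_T$, and that is the half of the equivalence that actually requires work. The paper's proof supplies it as follows: if $N_s$ existed with $s$ at or above the threshold, uniform thickness places $N_s$ inside a solid torus $S$ with convex boundary of dividing slope $-4n-2$; the layer $S\setminus N_s$ is a thickened torus, and the classification of tight contact structures on $T^2\times[0,1]$ produces convex tori of every slope in the Farey interval between $s$ and $-4n-2$. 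When $s>-4n-2$ this interval contains $\infty$, so one finds a meridional convex torus and hence an overtwisted disk, a contradiction. For the second family $\self(T)=-2k-1$ with $k>3n$ the threshold $-k-n-2$ lies \emph{below} $-4n-2$, so you must additionally rule out slopes in $[-k-n-2,-4n-2)$; there the Farey interval does not contain $\infty$, and the contradiction instead comes from the fact that an intermediate integer-slope convex torus would be a standard neighborhood (Corollary~\ref{detleg}) of a Legendrian approximation of $T$ whose forced rotation number $\tb+2k+1$ exceeds what the mountain range permits at that $\tb$. Neither step appears in your proposal, and without them the ``if and only if'' statements are unproven.

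A secondary point: you treat uniform thickness of the $(2,-2n-1)$-torus knots as the main obstacle and sketch a from-scratch convex-surface proof. This is unnecessary --- it is a theorem of Etnyre and Honda \cite{EtnyreHonda05} and the paper simply cites it. Redirect the effort you allocated there to the non-existence argument above, which is where the actual content of the theorem lies.
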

In this example, note that for self-linking numbers near the maximal possible value, the transverse knots have standard neighborhoods with the same possible slopes, but after some point, the possible slopes decrease as the self-linking number decreases. We note that this is not always the case!
\begin{theorem}\label{ex2}
Consider the $(4,-9)$-torus knot $K$ in $(S^3,\xi_{std})$. The maximal self-linking number of $K$ is $-31$. Let $T$ be a transverse representative of $K$. If 
\[
\self(T)\in \{-31, -33, -39, -41\}
\]
then $T$ has a standard neighborhood $N_s$ if and only if $s<-36$, if
\[
\self(T)\in \{-35, -37\}
\]
then $T$ has a standard neighborhood $N_s$ if and only if $s<-37$, and if 
\[
\self(T)=-2k-1
\]
for $k>20$, then $T$ has a standard neighborhood $N_s$ if and only if $s<-k-16.$

Moreover, any standard neighborhood of any transverse representative of $K$ is determined up to contact isotopy by the slope on its boundary. 
\end{theorem}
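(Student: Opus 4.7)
The plan is to deduce Theorem~\ref{ex2} by combining the general Theorem~\ref{main3} with the algorithm of Remark~\ref{thealgorithm} and Etnyre--Honda's classification of Legendrian torus knots in $(S^3,\xi_{std})$.

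For the uniqueness assertion (the final sentence of the theorem), I note that $(4,-9)$ is a negative torus knot. Etnyre--Honda's work shows that every negative torus knot in $(S^3,\xi_{std})$ is both Legendrian simple and uniformly thick, so Theorem~\ref{main3} applies directly and gives that a standard neighborhood of any transverse representative of $(4,-9)$ is uniquely determined up to contact isotopy by its boundary slope.

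For the explicit slope ranges in cases (i)--(iii), I would first write down the Legendrian mountain range of $(4,-9)$ using the Etnyre--Honda classification. The maximum Thurston--Bennequin number is $\tb_{\max}=-pq=-36$, and the rotation numbers realized at this level are precisely $\pm 3,\pm 5$; the remaining Legendrian classes are recorded by stabilization relations, together with any additional non-obvious peaks produced by the classification. With this combinatorial data in hand, the algorithm of Remark~\ref{thealgorithm} outputs, for each self-linking value $\sigma$, the supremum $r(\sigma)$ of slopes admitting a standard neighborhood $N_s$ for a transverse representative of self-linking $\sigma$. Running the algorithm case by case then gives the three regimes: when $\sigma\in\{-31,-33,-39,-41\}$ the self-linking is directly realized at $\tb_{\max}=-36$, so $r(\sigma)=-36$; when $\sigma\in\{-35,-37\}$ no max-$\tb$ Legendrian realizes $\sigma$ (the rotations $0,\pm 1$ are absent at $\tb_{\max}$), forcing a descent in $\tb$ and yielding $r(\sigma)=-37$; and when $\sigma=-2k-1$ with $k>20$ the relevant Legendrian lies in the stable range of iterated stabilizations of the $(\tb,\rot)=(-36,-5)$ representative, giving the linear formula $r(\sigma)=-k-16$.

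The main obstacle will be the combinatorial bookkeeping in the second step. In contrast to the $(2,-2n-1)$ case of Theorem~\ref{ex1}---whose mountain range is a single stabilization triangle with apex rotations $\pm 1$---the $(4,-9)$ mountain range has apex rotations $\pm 3,\pm 5$, producing a ``gap'' near rotation $0$ that propagates down the range. Verifying that this gap produces precisely the non-monotonic slope behavior observed in case (ii), and then closes in exactly the way required to yield the linear formula of case (iii), requires a careful traversal of the Etnyre--Honda classification for $(p,-q)$ with $p,q\geq 3$, combined with the bypass/thickening analysis of convex tori in a standard neighborhood of a Legendrian knot that underlies the algorithm of Remark~\ref{thealgorithm}.
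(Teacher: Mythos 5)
Your strategy is exactly the paper's: the published proof of Theorem~\ref{ex2} consists of a single sentence deferring to the proof of Theorem~\ref{ex1} (uniform thickness plus Legendrian simplicity, hence Theorem~\ref{main3} for uniqueness, and the mountain range plus the algorithm of Remark~\ref{thealgorithm} for the admissible slopes), with Figure~\ref{fig:49mountain} supplying the combinatorial input. So in outline you are doing the same thing, and your identification of the peaks at $\tb=-36$ with $\rot=\pm3,\pm5$ is correct.

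One concrete caveat on your case (ii): for $\self(T)=-37$ the mechanism you describe --- ``no max-$\tb$ Legendrian realizes $\sigma$, forcing a descent in $\tb$'' --- does not by itself produce the stated bound. Stabilizing the maximal-$\tb$ representatives (rotation numbers $\pm3,\pm5$ at $\tb=-36$) only meets the line $t=r-37$ at $(\rot,\tb)=(\pm1,-38)$, so a pure descent from the top of the mountain range would give $s<-38$, not $s<-37$. The value $-37$ requires a Legendrian representative with $(\rot,\tb)=(0,-37)$, and this point lies in none of the stabilization cones below the $\tb=-36$ peaks; it is an additional non-destabilizable class present in the Etnyre--Honda classification (this is precisely what Figure~\ref{fig:49mountain} records, and what your hedge about ``additional non-obvious peaks'' must supply). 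For $\self(T)=-35$ your descent argument is fine, since $(-2,-37)$ is a stabilization of $(-3,-36)$. With that extra peak included, your bookkeeping for cases (i) and (iii) goes through as you describe, and the uniqueness statement follows from Theorem~\ref{main3} exactly as in the paper.
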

Though not uniformly thick, we can also determine the standard neighborhoods of transverse unkots. 
\begin{theorem}\label{ex3}
Let $T$ be a transverse unknot in $(S^3,\xi_{std})$. If $\self(T)=-1$ then $T$ has a standard neighborhood $N_s$ if and only if $s<0$ and if $\self(T)=-2k-1$ for $k>0$, then $T$ has a standard neighborhood $N_s$ if and only if $s<-k-1.$

Moreover, any standard neighborhood of any transverse representative of $K$ is determined up to contact isotopy by the slope on its boundary. 
\end{theorem}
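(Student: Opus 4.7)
The argument combines Eliashberg's classification of Legendrian unknots in $(S^3, \xi_{std})$ with Honda's classification of tight contact structures on solid tori. Writing $\self(T) = -2k - 1$ with $k \geq 0$, set $L_k$ to be the Legendrian unknot with $(\tb, \rot) = (-k - 1, k)$, obtained from the standard max-$\tb$ unknot $L_0 = (-1, 0)$ by $k$ positive stabilizations. A check against Bennequin's inequality $\tb + |\rot| \leq -1$ shows that among Legendrian unknots satisfying $\tb - \rot = -2k - 1$, the maximum Thurston--Bennequin invariant is $-k - 1$, attained uniquely by $L_k$. Existence of $N_s$ for $s < -k - 1$ is then immediate by perturbing the convex boundary of the standard neighborhood $N(L_k)$ to a linear characteristic foliation of the desired slope, which already suffices when $k > 0$.

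When $k = 0$, the range must be extended to $s < 0$ by thickening $N(L_0)$ into its complement $V_0' = S^3 \setminus N(L_0)$. Since $V_0'$ is itself a standard neighborhood of the dual Hopf fiber (again a max-$\tb$ unknot), Honda's classification provides parallel convex tori inside $V_0'$ whose dividing slopes, expressed in $L_0$'s coordinates, traverse the Farey path $-1 \to -\tfrac12 \to -\tfrac13 \to \cdots$ accumulating at $0$. Each such torus bounds a larger solid torus $\tilde V \supset N(L_0)$; because Bennequin's inequality bounds the Thurston--Bennequin invariant of any Legendrian unknot in tight $S^3$ by $-1$, the maximum-$\tb$ Legendrian inside $\tilde V$ remains $L_0$, so $\tilde V$ realizes the standard model $\widehat N_s$ of $T_0$ for every $s$ strictly less than its boundary slope. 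Letting the boundary slope tend to $0$ supplies $N_s$ for all $s < 0$.

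The main obstacle is the corresponding non-existence for $k > 0$, which must verify that the analogous thickening mechanism fails---a subtlety that turns on carefully distinguishing \emph{``contains $T$ as a transverse knot''} from \emph{``is the standard model $\widehat N_s$ of $T$''}. The complement $V_k' = S^3 \setminus N(L_k)$ decomposes as $V_0' \cup (N(L_0) \setminus N(L_k))$, and its interior Farey path of parallel convex tori reads (in $L_k$'s coordinates) $-k-1 \to -k \to -k + 1 \to \cdots \to -1 \to -\tfrac12 \to -\tfrac13 \to \cdots \to 0$, with no vertex in the open interval $(-k-1, -k)$. The first thickening of $N(L_k)$ therefore has integer boundary slope $-k$ and contains the positive destabilization $L' = (-k, k-1)$ of $L_k$; the transverse pushoff of $L'$ has self-linking $-2k + 1 \neq -2k - 1$, so this thickening is the standard neighborhood of a different transverse knot. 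Subsequent thickenings along the Farey path are likewise standard neighborhoods of transverse pushoffs of Legendrian unknots $L_j$ for various $j < k$, none of whose self-linking number equals $\self(T)$. The case $k = 0$, $s \geq 0$ falls out of the same Farey-path analysis, since no tight solid torus around $L_0$ in $(S^3, \xi_{std})$ has boundary slope $\geq 0$.

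For the final uniqueness clause, given two standard neighborhoods $N_s$, $N_s'$ of $T$ at the same slope $s$, I will extract the maximum-$\tb$ Legendrian approximation from each; these are contact isotopic by Legendrian simplicity of the unknot, and after transporting one to the other the question reduces to the uniqueness of the tight contact structure on a solid torus with prescribed convex boundary, which is provided by Honda's classification.
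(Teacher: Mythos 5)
Your skeleton (Legendrian simplicity of the unknot, thickening to integer-slope tori, Honda's classification) matches the paper's, and the existence statement for $s<-k-1$ is fine, but the non-existence argument for $k>0$ has a genuine gap. You assert that the ``first thickening of $N(L_k)$ has integer boundary slope $-k$'' because the Farey path from $-k-1$ to $0$ has no vertex in $(-k-1,-k)$. The vertices of the shortest Farey path control the count of tight structures, not which slopes are realized: inside the basic slice between slopes $-k-1$ and $-k$ there are convex tori of \emph{every} rational slope in $(-k-1,-k)$, each bounding a solid torus containing $N(L_k)$, so your argument never addresses a putative $N_s$ with $s\in[-k-1,-k)$. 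Moreover, even at the vertex slopes, ``this thickening is the standard neighborhood of a different transverse knot'' does not by itself exclude some \emph{other} solid torus of that slope from being a standard neighborhood of $T$. The ingredient you are missing is the sign of the basic slices: inside the model $\widehat N_s$ consecutive integer-slope tori cobound \emph{negative} basic slices (consecutive Legendrian approximations differ by negative stabilization), whereas every basic slice of $S^3\setminus N(L_k)$ out to slope $-1$ is \emph{positive} --- which is exactly the positivity of the destabilization you note, but it must be played off against the internal structure of $\widehat N_s$, not against self-linking numbers. This, combined with the claim that every solid torus of slope $<-1$ in the unknot type thickens to a standard neighborhood of the $\tb=-1$ unknot (which is how the paper reduces to the argument of Theorem~\ref{ex1}), is what actually forbids $s\geq -k-1$.

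The $k=0$ case also needs more than you supply. For existence with $s\in[-1,0)$, ``the maximum-$\tb$ Legendrian inside $\tilde V$ remains $L_0$'' does not show that the tight solid torus you produce is contactomorphic to the model $\widehat N_s$ rel its transverse core; the paper avoids this by exhibiting the neighborhoods explicitly via a contact cut on $T^2\times[0,1]$ with $\ker\left(\cos\tfrac{\pi}{2}t\,d\theta+\sin\tfrac{\pi}{2}t\,d\phi\right)$. For uniqueness with $s\in[-1,0)$ your reduction fails because the containment is reversed: the standard neighborhood of the max-$\tb$ approximation sits \emph{inside} $N_s$, so transporting $N(L_0)$ to $N(L_0')$ as in Lemma~\ref{mainhelper} says nothing about $N_s$ versus $N_s'$, and uniqueness of the tight structure on the solid torus $N_s$ itself does not produce an ambient isotopy. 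The paper instead pins down the tight contact structure on the \emph{complement} $S^3\setminus N_s$ (a basic-slice plus outer-solid-torus analysis), obtains a contactomorphism of $S^3$ carrying $N_s$ to $N_s'$, and only then upgrades to a contact isotopy using the connectedness of the contactomorphism group of $(S^3,\xi_{std})$; without that last step one gets only coarse equivalence.
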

We now turn to transverse knots that have non-unique standard neighborhoods. In Remark~\ref{remark} we will see that all positive torus knots have transverse representatives with non-unique standard neighborhoods, but here, we give one example of this phenomenon.
\begin{theorem}\label{ex4}
Let $K$ be the right-handed trefoil in $(S^3,\xi_{std})$. A transverse representative of $K$ with self-linking number $-2k-1$, for $k\geq 0$, has a standard neighborhood $N_s$ if and only if $s<-k$. Moreover, any standard neighborhood of any such transverse representative of $K$ is determined up to contact isotopy by the slope on its boundary. 

If $T$ is the unique transverse knot representing $K$ with $\self(T)=1$, then $T$ has a standard neighborhood $N_s$ if and only if $s<1$. The neighborhood $N_s$ is unique up to contact isotopy if and only if $s<0$ or $s\in[1/2,1)$. If $s\in[1/(n+1), 1/n)$, then $T$ has $n$ distinct standard neighborhoods with boundary slope $s$. And if $s=0$, then there are infinitely many distinct standard neighborhoods. 
\end{theorem}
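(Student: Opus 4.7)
The plan is to translate the problem into Legendrian data using the general correspondence established earlier in the paper: a transverse knot $T$ has a standard neighborhood $N_s$ of boundary slope $s$ if and only if there is a Legendrian approximation $L$ of $T$ with $\tb(L) > s$. Since the right-handed trefoil $K$ is Legendrian simple with maximum-$\tb$ Legendrian $L_0$ at $(\tb, \rot) = (1, 0)$ and mountain range entirely generated by stabilizations of $L_0$, the allowed $(\tb, \rot)$-pairs are completely known. Setting $\tb - \rot = \self(T)$, a direct computation gives the maximum $\tb$ of a Legendrian approximation: for $\self(T) = -2k - 1$ with $k \geq 0$ it equals $-k$, attained only by the positive stabilization $S_+^{k+1}(L_0)$ (which has rotation number $k+1$); for $\self(T) = 1$ it equals $1$, attained only by $L_0$. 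This immediately yields the stated existence ranges $s < -k$ and $s < 1$.

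For uniqueness, Theorem~\ref{main2} applies whenever $s < \self(T) + \chi(\Sigma) = \self(T) - 1$, since the trefoil's Seifert surface has $\chi = -1$. This settles $s < -2k - 2$ in the $\self = -2k-1$ case and $s < 0$ in the $\self = 1$ case. For the remaining slopes $s \in [-2k-2, -k)$ with $\self = -2k-1$, I would argue as follows. By Legendrian simplicity the max-$\tb$ approximation $L = S_+^{k+1}(L_0)$ is unique up to contact isotopy, and so is its standard neighborhood $\mathcal{N}_L$; any $N_s$ of $T$ can be isotoped into $\mathcal{N}_L$. The intermediate region $\mathcal{N}_L \setminus \interior(N_s)$ is a $T^2 \times I$ with boundary dividing slopes $s$ and $-k$. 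Because $L$ is obtained from $L_0$ by $k+1$ \emph{positive} stabilizations, the basic slices inside $\mathcal{N}_L$ are all of positive sign, and Honda's classification forces a unique tight structure on this $T^2 \times I$, hence uniqueness of $N_s$.

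The heart of the theorem is the $\self(T) = 1$ case with $s \in [0, 1)$. Here the max-$\tb$ approximation is $L_0$ itself, whose standard neighborhood $\mathcal{N}_{L_0}$ has boundary slope $1$, and any $N_s$ embeds in $\mathcal{N}_{L_0}$. Because $L_0$ is unstabilized, no sign of basic slice is ruled out by the ambient structure, and Honda's classification then produces the claimed count of tight structures on $\mathcal{N}_{L_0} \setminus \interior(N_s) \cong T^2 \times I$: exactly $n$ when $s \in [1/(n+1), 1/n)$ (via the usual continued-fraction count along the Farey path from $s$ to $1$), and infinitely many when $s = 0$ (by iteratively stacking basic slices of slope $0$ without creating Giroux torsion). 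The latter is precisely the infinite family of tight contact structures without Giroux torsion advertised in the abstract. The main obstacle will be verifying that these combinatorially distinct tight structures on $T^2 \times I$ genuinely yield pairwise non-contact-isotopic $N_s$'s inside $(S^3, \xi_{std})$ — as opposed to merely non-isotopic rel boundary; this is expected to require a contact-topological invariant (such as the Heegaard Floer contact class of the complements $S^3 \setminus N_s$) or a direct bypass-obstruction argument in the trefoil exterior to distinguish the competing neighborhoods globally.
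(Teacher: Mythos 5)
Your existence ranges and the $s<0$ uniqueness are essentially right, but two of the steps you flag as ``immediate'' are not, and the heart of the theorem --- the non-uniqueness counts for $\self(T)=1$ --- rests on a mechanism that is actually false. First, the equivalence ``$N_s$ exists iff some Legendrian approximation has $\tb>s$'' is not a general correspondence established in the paper: the ``only if'' direction requires thickening $N_s$ outward past integer slopes, and the trefoil is \emph{not} uniformly thick. The paper supplies the needed substitute from \cite{EtnyreHonda05}: every solid torus in this knot type with dividing slope $<0$ thickens to a standard neighborhood of a $\tb=0$ Legendrian (which then yields an approximation with $\tb=\lceil s\rceil>-k$, the contradiction for $s\geq -k$), and no solid torus has dividing slope $>1$ (ruling out $s\geq 1$). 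Without these inputs your ``only if'' directions are unproved. Second, in your intermediate-slope uniqueness argument the signs are backwards (basic slices inside a standard neighborhood of a transverse knot are \emph{negative}; the positive slices live in $\mathcal{N}_{L_0}\setminus\mathcal{N}_L$), and uniqueness of the embedding up to \emph{ambient contact isotopy} is not the same as uniqueness of the tight structure on the intermediate $T^2\times I$; you need Lemma~\ref{mainhelper} and Lemma~\ref{lem1} (contactomorphisms of the standard solid torus are contact isotopic to the identity) to convert a rel-boundary contactomorphism into an ambient isotopy.

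The fatal gap is in the $\self(T)=1$, $s\in[0,1)$ case. It is simply not true that every $N_s$ embeds in $\mathcal{N}_{L_0}$: the entire source of non-uniqueness is the family of pairwise distinct, \emph{non-thickenable} solid tori $S^\pm_n$ of dividing slope $1/n$ classified in \cite{EtnyreLafountainTosun12}. An $N_s$ with $s\in[1/(n+1),1/n)$ can sit inside any of $S_1,S_2^-,\dots,S_n^-$, and for $m\geq 2$ the torus $S_m^-$ does \emph{not} thicken to slope $1$; the $n$ distinct neighborhoods are distinguished by which non-thickenable torus they lie in, not by counting tight structures on $\mathcal{N}_{L_0}\setminus N_s$ (a count that would in any case give the wrong answer --- e.g.\ it does not explain why $s\in[1/2,1)$ yields uniqueness while $s\in[1/3,1/2)$ yields two). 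Your $s=0$ mechanism is also internally inconsistent: a $T^2\times I$ with equal boundary slopes obtained by ``iteratively stacking basic slices'' is precisely Giroux torsion, which cannot occur in the Stein fillable $(S^3,\xi_{std})$; the actual argument identifies $S^3\setminus N_0^n$ with the complement $C_n'$ of a transverse curve in the tight torus bundle $(M,\xi_n)$ and distinguishes these via contact cuts \cite{Lerman01}, since a contactomorphism of the $C_n'$ would induce one of the pairwise non-contactomorphic $\xi_n$. Finally, you explicitly defer the step of showing the competing neighborhoods are globally non-isotopic; that step is exactly where the non-thickenability and contact-cut arguments carry the proof, so as written the proposal does not establish the theorem's main claims.
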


Given the examples we know of transverse knots with non-unique neighborhoods, we make the following conjectures. 

\begin{conjecture}
For any transverse knot $T$ in any contact manifold, any standard neighborhood of $T$ with boundary slope less than zero is unique up to contact isotopy. 
\end{conjecture}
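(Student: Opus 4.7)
The plan is to reduce the conjecture to Theorem~\ref{nbhdoftransverse} via a convex-surface analysis of the collar between the two neighborhoods. Let $N_s$ and $N'_s$ be standard neighborhoods of transversely isotopic knots $T$ and $T'$ with $s<0$. After applying a transverse isotopy I may assume $T=T'$. Because any standard neighborhood of slope $s$ contains one of slope $s''$ for every $s''\le s$, I would choose $s''<s$ small enough that Theorem~\ref{nbhdoftransverse} applies with the self-linking of $T$ as the input constant $n$; then Theorem~\ref{nbhdoftransverse} yields an ambient contact isotopy of $(M,\xi)$ carrying the inner neighborhood of $N_s$ onto that of $N'_s$. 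After applying this isotopy, the two outer neighborhoods share a common inner neighborhood $N_{s''}$, and the problem is reduced to comparing the two collars $\Sigma = N_s\setminus\operatorname{int} N_{s''}$ and $\Sigma' = N'_s\setminus\operatorname{int} N_{s''}$.

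Each collar is a tight contact structure on $T^2\times I$ whose convex boundary tori have dividing sets of slopes $s''$ and $s$. By Honda's classification of tight contact structures on the thickened torus, such a structure is determined, as an abstract contact manifold rel boundary, by the continued-fraction path in the Farey graph from $s''$ to $s$ together with a sign (a relative half-Euler class) on each basic slice. The hypothesis $s<0$ is essential here: the Farey arc from $s''$ to $s$ stays strictly to the negative side of the vertex $0$, which is exactly the vertex at which sign variation produces the non-unique neighborhoods of the $\self=1$ transverse trefoil in Theorem~\ref{ex4}. Because both collars embed as subsets of the fixed standard model $\widehat N_s$, the relative half-Euler class on each basic slice should be pinned down by the rotation-number datum of a Legendrian divide inside $\widehat N_s$, forcing the two collars to carry the same sign pattern and hence be contactomorphic rel boundary.

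Finally, I would upgrade the rel-boundary contactomorphism of collars to an ambient contact isotopy of $(M,\xi)$ by isotopy extension, using that the inner torus is already matched and the outer tori share the same convex characteristic foliation; composing with the ambient isotopy produced in the first step yields the desired contact isotopy from $N_s$ to $N'_s$.

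The main obstacle will be rigidifying the sign pattern of each basic slice inside $\widehat N_s$. For integer slope $s<0$ there is a single basic slice and the argument reduces to a single rotation-number computation, but when $s$ is not an integer the Farey path consists of many basic slices, and one must show that each sign is uniquely determined by global data on the neighborhood rather than being independently variable. A natural route is to pair each basic slice with a convex meridional or Seifert-type surface in $\widehat N_s$ and read off the sign as a signed intersection number, in the spirit of the estimate underlying Theorem~\ref{main2}. Making this picture robust under the failure of Legendrian simplicity, so that it covers the conjecture's claim for \emph{any} contact manifold (including the loose case in overtwisted manifolds), may require additional input beyond the techniques used for Theorem~\ref{nbhdoftransverse}.
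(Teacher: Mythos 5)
This statement is left as an open conjecture in the paper, so there is no proof of it there to compare against; what can be assessed is whether your proposal closes the gap, and it does not. The fatal problem is that your argument only ever examines the contact structure \emph{inside} the neighborhood. By definition every standard neighborhood with boundary slope $s$ is contactomorphic to the fixed model $\widehat N_s$, so the collar $N_s\setminus \operatorname{int} N_{s''}$ is the same abstract contact $T^2\times I$ for any two such neighborhoods, with all basic slices negative --- the paper uses exactly this fact in the proof of Lemma~\ref{part1}, and it holds for every $s$, not just $s<0$. Hence your ``sign pattern of the collar'' cannot be the mechanism that distinguishes $s<0$ from $s\geq 0$, and a strategy of the form ``match an inner core, then show the collars agree abstractly'' would equally well prove uniqueness at $s=0$, which Theorem~\ref{ex4} shows is false: the neighborhoods $N_0^n$ of the $\self=1$ transverse trefoil all contain contact-isotopic inner cores and have identical collars, yet are pairwise non-isotopic because their \emph{complements} $C_n'$ are pairwise non-contactomorphic. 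The breakdown in your write-up is the final ``isotopy extension'' step: a contactomorphism of the two collars rel the inner torus, together with matching characteristic foliations on the outer tori, does not produce an ambient contact isotopy of $(M,\xi)$ carrying one outer torus to the other --- producing that ambient isotopy is precisely the content of the conjecture, and it is controlled by how the solid torus sits in its complement (e.g., which non-thickenable torus it embeds into), not by the contact structure on the solid torus itself. This is why the paper's actual uniqueness arguments (Lemma~\ref{mainhelper}, Theorems~\ref{main2}, \ref{main3}, \ref{ex1}--\ref{ex4}) all work by embedding $N_s$ into a canonical larger torus whose ambient uniqueness is already known, and then invoking Lemma~\ref{lem1}.

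Two further points. First, your opening reduction invokes Theorem~\ref{nbhdoftransverse}, which is proved only for null-homologous knot types in tight manifolds (or non-loose knots in overtwisted ones), whereas the conjecture asserts uniqueness for any transverse knot in any contact manifold; you flag this at the end, but it means the very first step is unavailable in the stated generality, in particular for loose knots, where the paper's remark notes even the sufficiently-negative-slope case is unclear. Second, your reading of where the trefoil non-uniqueness comes from is not right: for $s\in[1/(n+1),1/n)$ the distinct neighborhoods are distinguished by which of the non-thickenable tori $S_m^-$ they sit inside, i.e., again by data in the complement, not by a sign choice at the Farey vertex $0$ inside the neighborhood.
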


\begin{conjecture}
In a fixed knot type $K$, there is some integer $n$ such that if $T$ is a transverse representative of $K$ with $\self(T)<n$, then any standard neighborhood of $T$ is determined up to contact isotopy by its boundary slope. 
\end{conjecture}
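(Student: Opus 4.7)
The plan is to leverage the correspondence between standard neighborhoods of a transverse knot $T$ with boundary slope $s$ and Legendrian approximations of $T$ lying on the boundary torus, combined with the general destabilization result for Legendrian knots that is the central tool of the paper. Under this correspondence, a neighborhood $N_s$ determines (and is determined by) a Legendrian ruling whose core is a Legendrian knot $L$ whose Thurston--Bennequin number is read off from $s$; two such neighborhoods are contact isotopic if and only if the corresponding Legendrian approximations are Legendrian isotopic through approximations of $T$. Thus the conjecture translates into a purely Legendrian question: for a transverse knot $T$ with $\self(T)$ sufficiently negative (depending only on the knot type $K$), any two Legendrian approximations of $T$ with the same $\tb$ are Legendrian isotopic through approximations.

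Next, I would argue that when $\self(T)$ is very negative, every Legendrian approximation of $T$ factors, after a large number of destabilizations, through a Legendrian representative whose $\tb$ lies in a finite, knot-type dependent window near $\overline{\tb}(K)$. Since the set of Legendrian isotopy classes of $K$ with $\tb$ in such a window is finite, and since the transverse push-off determines $\rot$ of any chosen approximation up to the stabilization history, the paper's destabilization theorem would allow one to rearrange or cancel stabilizations so as to identify any two candidate approximations. Iterating this identification up the Legendrian mountain range should then yield a Legendrian isotopy between the two original approximations, implementing the contact isotopy between the two neighborhoods.

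The main obstacle, and the reason this is stated only as a conjecture, is obtaining uniformity \emph{in the slope $s$}. Theorem~\ref{nbhdoftransverse} already gives uniqueness for slopes $s<r$ once $\self(T)\geq n$, with $r$ depending on $n$; the conjecture asks that by shrinking $\self(T)$ instead one can take $r=+\infty$. Achieving this requires a destabilization statement strong enough to handle Legendrian approximations whose $\tb$ is arbitrarily close to $\overline{\tb}(K)$, even though the transverse knot sits very far down in the stabilization tree. In the Legendrian simple, uniformly thick setting of Theorem~\ref{main3} this is automatic from the classification; in general one must rule out exotic Legendrian representatives of $K$ with large $\tb$ and the same classical invariants that obstruct destabilization. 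I expect the crux to be a finiteness statement for non-destabilizable Legendrian representatives of $K$ with $\tb$ above a knot-type dependent bound, after which the paper's destabilization theorem can be applied uniformly across all slopes.
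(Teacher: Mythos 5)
The statement you are addressing is stated in the paper as a \emph{conjecture}; the paper offers no proof of it, so there is nothing to compare your argument against. What you have written is a strategy sketch rather than a proof, and you correctly identify in your final paragraph the point where it breaks down --- but it is worth being precise about why that gap is essential and not merely technical.

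First, the reduction to a ``purely Legendrian question'' is not available at all slopes. The mechanism the paper uses (Lemma~\ref{mainhelper} via Lemma~\ref{part1}) requires that $N_s$ be contained in a standard neighborhood of a Legendrian approximation, and this containment is produced by Proposition~\ref{maindestab}, which only applies when $\tb<-1$ and $\pm\rot<\chi(\Sigma)$. For slopes $s$ near or above the maximal Thurston--Bennequin invariant there need be no such containing Legendrian neighborhood: the obstruction is the existence of non-thickenable solid tori, as in the right-handed trefoil (Theorem~\ref{ex4}), where the tori $S_n^{\pm}$ of \cite{EtnyreLafountainTosun12} give infinitely many distinct neighborhoods at slope $0$. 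Your plan to ``rule out exotic Legendrian representatives of $K$ with large $\tb$'' is precisely the open problem: nothing in the paper, and nothing in your sketch, explains why lowering $\self(T)$ should kill the non-thickenable tori that cause non-uniqueness, since those tori are a feature of the smooth knot type and the ambient contact structure, not of the particular transverse representative. Indeed, Remark~\ref{remark} shows every positive torus knot has transverse representatives with non-unique neighborhoods, and it is not known that these pathologies disappear for sufficiently negative self-linking number. Second, even where the Legendrian reduction works, your proposed identification of approximations by ``rearranging or cancelling stabilizations'' is exactly the content of Lemma~\ref{part2}, which only gives uniqueness of approximations with $\tb$ below a bound depending on $\self(T)$; it says nothing about approximations with $\tb$ in the ``finite window near $\overline{\tb}(K)$'' that your argument needs. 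So the proposal does not prove the conjecture; it restates it in Legendrian terms and defers the same difficulty.
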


The proof of this Theorem~\ref{nbhdoftransverse}  will depend on Legendrian approximations of a transverse knot, see Section~\ref{approx} for details on Legendrian approximations. The well-definedness of Legendrian approximations has also not been addressed in the literature. To prove the above theorem, we will first consider this issue. 
\begin{lemma}\label{part2}
Let $K$ be a null-homologous knot type in a contact manifold $(M,\xi)$. Assume that either $\xi$ is tight or only consider non-loose representatives of $K$. Given any integer $n$, there is an integer $m$ such that any two Legendrian approximations of a transverse knot $T$ in the knot type $K$ with $\self(T)>n$ with the same Thurston-Bennequin invariant less than $m$ are Legendrian isotopic. 
\end{lemma}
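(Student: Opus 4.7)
The plan is to combine the classical transverse-to-Legendrian correspondence with a uniform destabilization statement. Recall that a Legendrian approximation of $T$ is a Legendrian knot $L$ whose positive transverse pushoff is transversely isotopic to $T$, and such an $L$ satisfies $\self(T) = \tb(L) - \rot(L)$. In particular, if $L_1, L_2$ are two Legendrian approximations of $T$ with common $\tb(L_1) = \tb(L_2) = k$, then they automatically have the same rotation number $\rot(L_i) = k - \self(T)$; so only a Legendrian isotopy between them needs to be produced.

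The key classical input is the Fuchs--Tabachnikov theorem: two Legendrian knots have transversely isotopic positive transverse pushoffs if and only if they become Legendrian isotopic after some common number of negative stabilizations. Applied to $L_1$ and $L_2$, this supplies an integer $N \geq 0$ with $S_-^N(L_1)$ Legendrian isotopic to $S_-^N(L_2)$, where $S_-$ denotes negative stabilization. To finish, I would invoke a \emph{uniform destabilization} statement --- the general destabilization result advertised in the abstract --- which I expect to take the following shape: there is an integer $m_0$, depending only on the knot type $K$ and the lower bound $n$ on $\self(T)$, such that any Legendrian representative $L$ of $K$ (non-loose if $\xi$ is overtwisted) with $\tb(L) < m_0$ has all of its negative destabilizations Legendrian isotopic. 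Setting $m = m_0$, one peels off the $N$ stabilizations iteratively: at each step $j$ from $N$ down to $1$, both $S_-^{j-1}(L_1)$ and $S_-^{j-1}(L_2)$ are negative destabilizations of $S_-^{j}(L_1)$, which has Thurston--Bennequin invariant $k - j < m_0$, so by uniform destabilization they are Legendrian isotopic. After $N$ steps one reaches the desired isotopy between $L_1$ and $L_2$.

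The hard part is therefore the uniform destabilization statement itself. My approach would be convex surface theory applied to a Seifert surface $\Sigma$ for $L$, made convex with Legendrian boundary. The dividing set $\Gamma_\Sigma$ encodes the bypasses, and hence the negative destabilizations, available along $L$; when $\tb(L)$ is sufficiently negative relative to $-\chi(\Sigma)$, tightness (respectively non-looseness) of $\xi$ on $M \setminus L$ should sharply constrain the boundary-parallel arcs of $\Gamma_\Sigma$ that encode negative destabilizations and force all such bypasses to lie in a single parallel family, hence be contact isotopic. Tightness or non-looseness is essential here: in the presence of an overtwisted disk disjoint from $L$, one can manufacture bypasses of either sign at will and thereby produce non-isotopic destabilizations, which is precisely why the lemma (and the present argument) must exclude loose representatives in the overtwisted case.
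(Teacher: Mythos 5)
Your reduction is fine as far as it goes: the identity $\self(T)=\tb(L)-\rot(L)$ pins down the rotation numbers, and the Epstein--Fuchs--Meyer/Etnyre--Honda correspondence does give a common $N$ with $S_-^N(L_1)$ Legendrian isotopic to $S_-^N(L_2)$. The gap is the ``uniform destabilization'' statement you then invoke. What the paper actually proves (Proposition~\ref{maindestab}) is an \emph{existence} statement: if $\tb(L)<-1$ and $\pm\rot(L)<\chi(\Sigma)$, then $L$ admits a $\pm$-destabilization. It says nothing about all negative destabilizations of a given $L$ being Legendrian isotopic, and that uniqueness statement is established nowhere in the paper. Worse, it is essentially equivalent to the lemma you are trying to prove: if $L_1'$ and $L_2'$ are two negative destabilizations of the same $L$, then $S_-(L_1')$ and $S_-(L_2')$ are Legendrian isotopic, hence $(L_1')^t$ and $(L_2')^t$ are transversely isotopic, so $L_1'$ and $L_2'$ are exactly two Legendrian approximations of one transverse knot with the same Thurston--Bennequin invariant --- the case $N=1$ of the lemma. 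Your convex-surface sketch (``tightness should force all such bypasses to lie in a single parallel family'') does not close this circle: distinct boundary-parallel dividing arcs on a convex Seifert surface can a priori yield non-isotopic destabilizations, and not every destabilization of $L$ need even be visible as a bypass on one chosen surface.

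The paper's proof goes around this difficulty rather than through it. It uses Proposition~\ref{maindestab} only in its existence form, to show that every Legendrian approximation of a transverse knot with $\self=n$ destabilizes down to the single level $(\rot,\tb)=(b,b+n)$; the Colin--Giroux--Honda finiteness theorem (this is where tightness or non-looseness is really used) says there are only finitely many Legendrian knots at that level; among finitely many knots, any two that become isotopic after negative stabilization do so after a bounded number $k$ of stabilizations; hence below level $b+n-k$ distinct iterated stabilizations stay distinct under all further negative stabilization, and two distinct such knots would then have distinct transverse push-offs --- contradicting that both approximate $T$. If you want to salvage your peeling argument you must first prove your uniqueness-of-destabilization claim, and the only route available in the paper is precisely this finiteness-plus-bounded-merging argument; as written, your proposal assumes the hard step.
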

We now briefly explore examples of knot types where the Legendrian approximation of any transverse knot is determined by its Thurston-Bennequin invariant and other examples where this is not true. 
\begin{example}
If $K$ is any Legendrian simple knot, then clearly, Legendrian approximations of transverse knots in this knot type are determined by their Thurston-Bennequin invariant, and the possible Thurston-Bennequin invariants are determined by the Legendrian mountain range of $K$. For example, if $T_{-2k-1}$ is a transverse unknot with self-linking number $-2k-1$, then it may be approximated by a Legendrian unknot with $\tb$ any integer less than or equal to $-k$. We have similar results for torus knots and the figure eight knot.
\end{example}

\begin{example}
Now consider the twist knot $T_{-2n-1}$. The classification of Legendrian twist knots can be found in \cite{EtnyreNgVertesi13}. If $T$ is a transverse representative of $T_{-2n-1}$ with self-linking number $-2k-3$ for $k\geq 0$, then if $k>0$ its Legendrian approximations are determined by their Thurston-Bennequin invariants, which can be any integer less than or equal to $-2-k$. When $k=0$, then Legendrian approximations with $\tb<-3$ are determined by their Thurston-Bennequin invariant, which can be any integer less than or equal to $-4$. However, there are $n$ distinct Legendrian approximations of $T$ with $\tb=-3$. 

Negative even twist knots $T_{-2n}$ are more complicated. Here we will focus on $K_{-6}$. Using the classification in \cite{EtnyreNgVertesi13}, other values of $n$ can easily be worked out. We first recall that there are exactly $2$ transverse knots $T$ and $T'$ in the knot type $K_{-6}$ with self-linking number $1$. And exactly one transverse representative $T_{-1-2k}$ with $\self=-1-2k$, for $k\geq 0$. Each $T_{-1-2k}$ has a unique Legendrian approximation with $\tb\leq -k-1$, but there are two Legendrian approximations of $T_{-1-2k}$ with $\tb=-k$. Similarly, $T$ and $T'$ each have unique Legendrian appoximations with $\tb\leq 0$, but one of them, say $T$, has $3$ Legendrian appoximations with $\tb=1$, while $T'$ has $2$ Legendrian appoximations with $\tb=1$. 
\end{example}

%%%%%%%----------------------------------------------------------------------------
\subsection{Infinitely many distinct contact structures}
%%%%%%%----------------------------------------------------------------------------
The proof of Theorem~\ref{ex4} has a curious corollary. 
\begin{theorem}\label{curous}
Let $M$ be the complement of the right-handed trefoil in $S^3$. There are infinitely many distinct, up to contactomorphism, contact structures on $M$ with no Giroux torsion that induce a linear foliation on $\partial M$ of slope $0$. 
\end{theorem}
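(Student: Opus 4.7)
The plan is to realize the infinite family of contact structures directly as complements, inside $(S^3, \xi_{std})$, of the non-contact-isotopic standard neighborhoods produced by Theorem~\ref{ex4}, and then to show that any contactomorphism between two such complements would glue back to an ambient contact isotopy of their filling neighborhoods, contradicting Theorem~\ref{ex4}.

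First I would fix the unique transverse representative $T$ of the right-handed trefoil with $\self(T) = 1$. By Theorem~\ref{ex4}, at slope $s = 0$ there are infinitely many pairwise non contact-isotopic standard neighborhoods $N^{(i)} \subset (S^3, \xi_{std})$, $i \in \N$, each with boundary slope $0$. Set $M_i := S^3 \setminus \interior(N^{(i)})$ and let $\xi_i$ be the restriction of $\xi_{std}$. Each $M_i$ is diffeomorphic to the trefoil complement $M$, and the characteristic foliation of $\xi_i$ on $\partial M_i$ is linear of slope $0$ by construction. Since $(S^3, \xi_{std})$ is tight and has no Giroux torsion, any contact submanifold inherits these properties, so each $\xi_i$ is torsion-free as required.

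The substantive content is that the $\xi_i$ are pairwise non-contactomorphic. Suppose for contradiction that $\phi:(M_i,\xi_i)\to(M_j,\xi_j)$ is a contactomorphism for some $i \neq j$. The restriction $\phi|_{\partial M_i}$ is a diffeomorphism of a torus preserving a linear foliation of slope $0$, and such diffeomorphisms form a connected group generated by translations along and transverse to the leaves. After composing $\phi$ with such a boundary rotation and extending it inward through a collar using a contact vector field transverse to $\partial M_i$, one may assume $\phi$ matches the canonical boundary identification obtained from viewing $N^{(i)}$ and $N^{(j)}$ as copies of the model $\widehat{N}_0$. Gluing $\phi$ to the identity map of $\widehat{N}_0$ then produces a global contactomorphism $\Phi: (S^3,\xi_{std}) \to (S^3,\xi_{std})$ satisfying $\Phi(N^{(i)}) = N^{(j)}$. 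Since the contactomorphism group of $(S^3,\xi_{std})$ is connected (combine Gray stability, Eliashberg's uniqueness of the tight structure on $S^3$, and Cerf's theorem $\pi_0(\mathrm{Diff}^+(S^3))=0$), there is a contact isotopy $\Phi_t$ from the identity to $\Phi$; then $\Phi_t(N^{(i)})$ is an ambient contact isotopy taking $N^{(i)}$ to $N^{(j)}$, contradicting Theorem~\ref{ex4}.

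The step I expect to be the main obstacle is the boundary matching in the gluing argument: one must verify carefully that the inward extension of the boundary rotation via a contact vector field produces a contactomorphism of a collar onto itself compatible with both gluings, so that the glued map $\Phi$ is genuinely smooth and contact across $\partial N^{(j)}$. Once this technicality is pinned down, the remaining ingredients, namely the tightness and torsion-freeness of $(S^3,\xi_{std})$ and the connectedness of its contactomorphism group, are standard, and the theorem follows immediately from Theorem~\ref{ex4}.
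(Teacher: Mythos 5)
Your proposal is correct in outline but runs in the opposite logical direction from the paper, and it leaves its hardest step unexecuted. The paper's proof of Theorem~\ref{curous} is a one-line reference back to the proof of Theorem~\ref{ex4}: there the complements of the slope-$0$ neighborhoods are identified explicitly with the manifolds $C_n'$ obtained by removing a transverse curve from the torus bundle given by $0$-surgery on the trefoil, equipped with the Giroux--Honda tight structures $\xi_n$ of Giroux torsion $n-1$. Distinctness is then immediate: a contactomorphism $C_n'\to C_m'$ preserves the boundary characteristic foliation, hence descends through the contact cut to a contactomorphism $(M_0,\xi_n)\to (M_0,\xi_m)$ of the filled torus bundles, which cannot exist since the $\xi_n$ are pairwise non-contactomorphic. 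Crucially, in the paper this non-contactomorphism of the complements is established \emph{first}, and the slope-$0$ part of Theorem~\ref{ex4} is deduced from it. You take Theorem~\ref{ex4} as a black box and try to recover the complement statement by gluing the solid torus back in; that is not circular, but it obliges you to prove the converse implication ``complements contactomorphic $\Rightarrow$ neighborhoods ambiently contact isotopic,'' which is exactly where the work lies and which the paper's contact-cut argument avoids entirely.

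Two points in that converse need more than you give them. First, before isotoping $\phi|_{\partial M_i}$ to the canonical identification, you must control its action on $H_1(\partial M_i)$: the group of foliation-preserving diffeomorphisms of $T^2$ is not connected (it contains $-I$ and reflections, and its identity component is larger than the translations, though it retracts to them), so you need that $\phi|_{\partial}$ takes meridian to meridian and longitude to longitude with the correct signs --- this uses that $\phi$ is a diffeomorphism of a knot complement and preserves orientation and coorientation. Second, a path of foliation-preserving boundary diffeomorphisms is not automatically generated by a contact vector field; one must either reduce to the $\theta$- and $\phi$-rotations of the model $\widehat N_0$ (which do extend as contactomorphisms) or run a Gray/Moser argument in a collar. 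Both points are fixable, so your route can be completed, but the paper's approach of distinguishing the complements directly via the contact cut is both the logically primary step and substantially cleaner.
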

We note that by work of Colin, Giroux, and Honda \cite{ColinGirouxHonda09} an atoroidal manifold only admits finitely many tight contact structures up to isotopy. This is also true if the manifold has a convex boundary. The main way to create infinitely many contact structures up to isotopy is by introducing Giroux torsion along an incompressible torus. If one is considering contact structures up to isotopy, then one can construct infinitely many distinct tight contact structures (for example, on circle bundles and Seifert fibered spaces), but the examples in the theorem above are, to the author's knowledge, the first infinite family of contact structures, up to contactomorphis, with zero Giroux torsion.

%%%%%%%----------------------------------------------------------------------------
\subsection*{Acknowledgements}
%%%%%%%----------------------------------------------------------------------------
The author thanks Kai Cieliebak for helpful conversations about Lutz twists that led to the realization that neighborhoods of transverse knots had not previously been understood up to contact isotopy. We also thank Hyunki Min for helpful conversations and Bülent Tosun for helpful conversations and encouragement to write up these results. We also greatful for discussions with Marc Kegel and Isacco Nonino that led to including Theorem~\ref{detablizetransverse} in the paper. We finally thank Jeremy Van Horn-Morris and Shea Vela-Vick for helpful discussions years ago that led to the proof of Proposition~\ref{maindestab}. The author was partially supported by the National Science Foundation grant DMS-2203312 and the Georgia Institute of Technology's Elaine M. Hubbard Distinguished Faculty Award.

%%%%%%%%%%%%%%%%%%%%%%%%%%%%%%%%%%%%
\section{Background}\label{background}
%%%%%%%%%%%%%%%%%%%%%%%%%%%%%%%%%%%%
We assume the reader is familiar with the basic results from contact geometry and the theory of Legendrian and transverse knots as can be found in \cite{Etnyre05}, see also \cite{EtnyreTosunPre24, Geiges08}. We recall a few results here that are needed below. We will also use standard facts about convex surfaces. Most of these facts can be found in \cite{EtnyreHonda01b, Honda00a}, but we caution that the conventions for slopes of curves on tori and the Farey graph are different than those that we use here. For the ``modern'' conventions, we refer the reader to \cite{EtnyreRoy21, EtnyreTosunPre24}

%%%%%%%----------------------------------------------------------------------------
\subsection{The classical transverse knots neighborhood theorem}\label{nbhds}
%%%%%%%----------------------------------------------------------------------------
Consider the manifold $\R^2\times S^1$ with the contact structure $\ker(\cos r\, d\phi + r\sin r\, d\theta)$,  where $(r,\theta)$ are polar coordinates on $\R^2$ and $\phi$ is an angular coordinate on $S^1$. Note that the torus $T_a$ of radius $a$, that is $\{(r,\theta,\phi): r=a\}$, has a linear characteristic foliation with slope $-\frac 1a \cot a$. Given a number $s$ there will be a unique smallest positive $a$ such that $s=-\frac 1a \cot a$. We define the solid torus $\widehat N_s$ to be $\{(r,\theta,\phi): r\leq a\}$. So $\partial \widehat N_s$ has linear characteristic foliation of slope $s$ and given any $s'<s$, the torus $\widehat N_{s'}$ is contained in $\widehat N_s$. (Note as $s$ goes to $-\infty$, the corresponding values of $a$ go to $0$.)

Now given a transverse knot $T$ and a choice of framing on $T$, it is well-known, see \cite{EtnyreTosunPre24, Geiges08}, that $T$ has a neighborhood $N_s$ that is contactomorphic to $\widehat N_s$ for sufficiently small $s$ via a contactomorphism taking the framing to the product framing on $\widehat N_s$. When $T$ is null-homologous, we will always give $T$ the framing coming from a surface that it bounds. 
We call $N_s$ a \dfn{standard neighborhood} of $T$.  From the discussion above, if $T$ has such a neighborhood, then it also has a neighborhood $N_{s'}$ for any $s'\leq s$. One main goal of this paper is to show that $N_s$ only depends on the transverse isotopy class of $T$ and $s$ up to ambient contact isotopy if $s$ is sufficiently negative and we will see examples where there are distinct $N_s$ with $s$ is large.

%where $N_\epsilon=\{(r,\theta, \phi) : r\leq \epsilon\}\subset \R^2\times S^1$ with the contact structure $\ker(d\phi+r^2\, d\theta)$, where $(r,\theta)$ are polar coordinates on $\R^2$ and $\phi$ is an angular coordinate on $S^1$. When $T$ is null-homologous, we will always assume that when identifying $N$ and $N_\epsilon$ the product framing on $N_\epsilon$ is identified with the framing on $T$ coming from a surface that $T$ bounds.

%%%%%%%----------------------------------------------------------------------------
\subsection{Transverse push-offs and Legendrian approximations}\label{approx}
%%%%%%%----------------------------------------------------------------------------
Throughout this paper, we will need to go between transverse and Legendrian knots, so we review that process here.

We first recall that a Legendrian knot has a unique transverse push-off. More specifically, given an oriented Legendrian knot $L$ in a contact manifold $(M,\xi)$ it is well known that $L$ has a standard neighborhood $N_L$ contactomorphic to $S^1\times D_\epsilon=\{(\phi, y,z): y^2+z^2 \leq \epsilon^2\}$ with the contact structure $\ker(dz -y\, d\phi)$, \cite{EtnyreTosunPre24, Geiges08}, where $\phi$ is the angular coordinate on $S^1$ and $(y,z)$ are Cartisian coordinates on the disk and the orientation on $L$ is identified with the positive $\phi$ direction. We can now define the \dfn{transverse push-off of $L$}, denoted $L^t$, to be the image of $S^1\times\{(0,\epsilon')\}$ for any $\epsilon'\in (0,\epsilon)$. One may check that the transverse isotopy class of $L^t$ depends only on the Legendrian isotopy class of $L$, that 
\[
\self(L^t)=\tb(L)-\rot(L),
\]
and that $(S_-(L))^t=L^t$ where $S_-(L)$ is the negative stablization of $L$ and $(S_+(L))^t$ is the transverse stablization of $L^t$ where $S_+(L)$ is the positive stablization of $L$. See \cite{EtnyreHonda01b}. 

We also notice that the classification of tight contact structures on solid tori \cite{Giroux00, Honda00a} immediately implies the following lemma we will need below. 
\begin{lemma}\label{subtori}
With the notation above, inside of $N_L$ we can find a solid torus $N_s$ that will be a standard neighborhood of $T$ for any $s<\tb(K)$. 
\end{lemma}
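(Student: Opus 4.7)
The plan is to appeal directly to the Giroux--Honda classification of tight contact structures on solid tori, working in the standard model $N_L = S^1 \times D_\epsilon$ with contact form $\alpha = dz - y\,d\phi$. First, I would perturb $\partial N_L$ to be convex: the characteristic foliation on $\{y^2 + z^2 = \epsilon^2\}$ has two Legendrian divides along $\{y=0\}$ (parallel to $\partial_\phi$) together with ruling curves of slope $1$ in the contact framing, so after a $C^\infty$-small perturbation the dividing set consists of two curves parallel to the Legendrian divides. In the contact framing these have slope $0$; converting to the Seifert longitude of $L$, which differs from the contact longitude by $\tb(L)$ meridians, the dividing slope on $\partial N_L$ is $\tb(L)$.

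Given $s < \tb(L)$, I would next produce a convex torus $T_s \subset N_L$ of dividing slope $s$ by applying Honda's classification to a $T^2 \times I$ region between $\partial N_L$ and a small torus parallel to the core. The standard Legendrian neighborhood $N_L$ is, in Honda's terminology, the ``continued-fraction block'' tight solid torus associated to the Farey edge joining $\infty$ and $\tb(L)$, and the structure of such a block realizes convex tori of every slope on the arc of the Farey graph running from $\tb(L)$ through $\tb(L)-1, \tb(L)-2,\ldots,-\infty$; in particular, every $s < \tb(L)$ is achieved. Let $N_s \subset N_L$ denote the sub-solid-torus bounded by $T_s$ and containing the Legendrian core $L$.

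I would then identify $N_s$ with $\widehat{N}_s$. By the classification of tight contact structures on a solid torus with convex boundary of slope $s$, such structures are enumerated by the signs assigned to the basic slices in a Farey-path decomposition. Since $N_L$ is itself the standard (universally tight, continued-fraction-block) solid torus of slope $\tb(L)$, the basic slice lying between $T_s$ and $\partial N_L$ has the sign dictated by this standard structure, and $N_s$ inherits a sign-consistent decomposition. Hence $N_s$ is again a standard continued-fraction block, which is precisely the standard transverse neighborhood $\widehat{N}_s$. The transverse core of $\widehat{N}_s$ is smoothly isotopic to $L$ inside $N_s \subset N_L$ and is therefore transversely isotopic to the push-off $L^t$, so $N_s$ is a standard neighborhood of $L^t$ with boundary slope $s$, as claimed.

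The main obstacle is the sign-matching step identifying $N_s$ with $\widehat{N}_s$ rather than one of the non-standard (``virtually overtwisted'' or mixed-sign) tight solid tori of slope $s$ allowed by Honda's classification; that is the one place where the full classification on solid tori (rather than just on $T^2 \times I$) is actually needed. Everything else is a routine computation in the standard model together with invoking the existence of convex tori of prescribed slope inside a continued-fraction block.
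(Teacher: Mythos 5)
Your argument is correct and follows exactly the route the paper intends: the paper states that Lemma~\ref{subtori} ``immediately'' follows from the classification of tight contact structures on solid tori \cite{Giroux00, Honda00a}, and your proposal simply unpacks that citation, including correctly isolating the one substantive point (that the universal tightness of the standard Legendrian neighborhood forces the sign-consistent basic-slice decomposition, so the sub-torus is the standard transverse model $\widehat N_s$ rather than a virtually overtwisted or mixed-sign solid torus). No gaps beyond the level of detail the paper itself suppresses.
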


Now, given a transverse knot $T$ in a contact manifold, we can find Legendrian knots that have $T$ as their transverse push-off. To see this, from the previous subsection, we know $T$ has a standard neighborhood $N_s$ for some $s<0$. Now for any $n<s$ we can consider the sub-neighborhood $N_n$ inside of $N_\epsilon$. Let $L_n$ be a leaf of the characteristic foliation of $\partial N_n$. Clearly $L_n$ is an $(1,n)$-curve on $\partial N_n$ and hence smoothly isotopic to $T$. It is not hard to see that the transverse push-off $L_n^t$ is transverse isotopic to $T$ and that $L_{n-1}=S_-(L_n)$, \cite{EtnyreHonda01b}. So there is not a single Legendrian knot related to $T$, but a series of them all related by negative stabilization. We call $L_n$ for any $n$ a \dfn{Legendrian approximation to $T$}. 

We finally recall that given a smooth knot type $K$, the classification of Legendrian knots in the knot type $K$ up to Legendrian isotopy and negative stabilization is equivalent to the classification of transverse knots in a smooth knot type $K$; moreover, the identification is given by transverse push-off and Legendrian approximations. See \cite{EpsteinFuchsMeyer01, EtnyreHonda01b}.

We recall how to explicitly use this last result to understand transverse knots in a smooth knot type $K$ in terms of Legendrian realizations of $K$. Suppose we have a classification of Legendrian representatives of $K$. We first consider the subset $\mathcal{M}_K$ of $\Z^2$ consisting of ordered pairs $(\rot(L),\tb(L))$ for all Legendrian realizations of $K$ and two each point in $\mathcal{M}_K$ we associate the number of Legendrian isotopy classes of Legendrian realizations of $K$ with those invariants. We call $\mathcal{M}_K$ the \dfn{mountain range of $K$}. (To completely understand the classification of Legendrian knots in the knot type $K$, we should also know what happens when Legendrian knots with invariants $(n,m)$ are stabilized positively and negatively.) Now, to understand the transverse representatives of $K$ with $\self=n$ we consider the intersection of the line of $t=n+r$ in $\Z^2$ with $\mathcal{M}_K$. The integers associated with the points in this set will be the same for points where one of the coordinates is negative enough. This number will be the number of transverse representatives with $\self=n$.

%%%%%%%%%%%%%%%%%%%%%%%%%%%%%%%%%%%%
\section{Bypasses on Seifert surfaces}
%%%%%%%%%%%%%%%%%%%%%%%%%%%%%%%%%%%%
In this section, we prove Theorem~\ref{maindestab} that says for a Legendrian knot $L$ in a contact manifold $(M,\xi)$ bounding a surface $\Sigma$ with $\tb(L)<-1$ and $\pm \rot(L)<\chi(\Sigma)$, we may isotopy $\Sigma$, rel $L$, so that there is a $\pm$-bypass for $L$ on $\Sigma$. 
\begin{proof}[Proof of Theorem~\ref{maindestab}]
For convenience in our arguments below, we assume that $L$ is not a Legendrian unknot. The case of a Legendrian unknot is easily dealt with as they have been classified \cite{EliashbergFraser09}. We will also assume that the complement of $L$ is tight, since otherwise, we can find a bypass along any arc \cite[Proposition~3.22]{Vogel09}, and we would be done. 

Since $\tb(L)<0$, we may isotop $\Sigma$ so that it is convex. It is well-known, see \cite{EtnyreHonda01b,Kanda1998}, that 
\[
\rot(L)=\chi(\Sigma_+)-\chi(\Sigma_-),
\]
where $\Sigma_+$ is the positive part of $\Sigma$ minus the dividing set $\Gamma$, and $\Sigma_-$ is the negative part. We note that Giroux's tightness criterion \cite[Theorem~3.5]{Honda00a} tells us that the assumption that the complement of $L$ is tight says that any disk component of $\Sigma_\pm$ has boundary containing arcs in $\Gamma$ and arcs in $\partial \Sigma$. 

We assume that $\rot(L)<\chi(\Sigma)$ (the $\rot(L)>-\chi(\Sigma)$ case is analogous). We will show that there is an arc in $\Gamma$ that cobounds a disk with $\partial \Sigma$ that is part of $\Sigma_-$. It is well-known, see \cite[Proposition~3.18]{Honda00a}, that this implies one may further isotop $\Sigma$ so that there is a negative bypass for $L$ on $\Sigma$. 

%To simplify the argument, we will alter $\Sigma,\Gamma,$ and $\Sigma_\pm$ in such a way that if we can find the disk mentioned above in the new configuration, then it will exist in the original configuration too. Specifically, we arrange that in any isotopy class of essential arcs in $\Sigma$ there are at most two elements of $\Gamma$. If there were more we could remove two adjacent disks from $\Sigma\setminus\Gamma$. This would not affect $\chi(\Sigma_+)-\chi(\Sigma_-)$ and if we can find the desired disk in the new configuration, then there is one in the old configuration too.

Notice that if $a$ is the number of arcs in $\Gamma$ then
\[
\chi(\Sigma)=\chi(\Sigma_+)+\chi(\Sigma_-) - a.
\]
Now let $d$ be the number of disk components of $\Sigma_-$ and $e$ be the sum of the Euler characteristics of the non-disk components of $\Sigma_-$. Thus
\[
\chi(\Sigma_+)=\chi(\Sigma) + a - d - e,
\]
and hence
\[
\rot(L)=\chi(\Sigma_+)-\chi(\Sigma_-)=\chi(\Sigma) - 2e + (a - 2d).
\]

Suppose that no disk in $\Sigma_-$ has boundary a single arc in $\Gamma$ and a single arc in $\partial \Sigma$. Thus, each disk component of $\Sigma_-$ contains at least $2$ arcs, and we see that $a-2d$ is non-negative. But of course $-2e$ is also non-negative, so $\rot(L)\geq \chi(\Sigma)$. This contradicts our assumption on the rotation number, and so we see that $\Sigma_-$ must contain a disk with boundary the union of one arc in $\Gamma$ and one arc in $\partial \Sigma$, as claimed. 
%Notice that $\Sigma$ is built from $\Sigma_+$ by attaching $1$-handles and $2$-handle pairs, and there must be fewer $2$-handles than $1$-handles (since the $\Sigma_-$ has not disk components bounded by a simple closed curve in $\Gamma$). Thus $-2g+1=\chi(\Sigma)\leq \chi(\Sigma_+)$.  Moreover, if $n$ is the number of distinct isotopy classes of essential arcs in $\Gamma$, then we can similarly see that $\chi(\Sigma_+)\geq -2g+1+n$. 
%
%Suppose $\chi(\Sigma_+)=-2g+1+k$ for some non-negative integer $k$. Then there are at most $k$ isotopy classes of essential arcs in $\Gamma$ and in order that $\rot(L)=\chi(\Sigma_+)-\chi(\Sigma_-)<-2g+1$ we must have that $\chi(\Sigma_-)>k$ which implies that $\Sigma_-$ contains more than $k$ disks (as disks are the only surfaces with boundary that have positive Euler characteristic). Suppose that none of these disks are co-bounded by an arc in $\Gamma$ and one in $\partial \Sigma$. Then, each disk has essential arcs in $\Gamma$ in its boundary, and by our normalization of $\Sigma,\Gamma,$ and $\Sigma_\pm$ we see that each disk must have distinct essential arcs in $\Gamma$. However, that implies that there are more than $k$ isotopy classes of essential arcs in $\Gamma$, contradicting our hypothesis. Thus $\Sigma_-$ contains a disk of the desired form. 
\end{proof}

%%%%%%%%%%%%%%%%%%%%%%%%%%%%%%%%%%%%
\section{Destabilizing non-loose knots}
%%%%%%%%%%%%%%%%%%%%%%%%%%%%%%%%%%%%
In this section, we prove Theorem~\ref{mainnonloose} which says that for a non-loose null-homologous Legendrian knot $L$ in an overtwisted contact manifold $(M,\xi)$, if $\pm \rot(L)<\chi(L)$ and $\tb(L)>0$, then there are non-loose knots $L_i$, for $i\in \N$, such that $L_0=L$ and $L_{i-1}=S_\pm(L_i)$. 
\begin{proof}[Proof of Theorem~\ref{mainnonloose}]
Suppose $L$ is such a non-loose Legendrian knot with $\rot(L)>-\chi(L)$ and $\tb(L)=n>0$ (the other case has a similar proof). Let $N_L$ be a standard neighborhood of $L$ and arrange that $\partial N_L$ is in standard form with ruling curves of slope $0$. As the ruling curves have a slope less than the dividing curves, each ruling curve is isotopic to a stabilization of $L$ with the same rotation number as $L$, \cite[Lemma~5.7]{DaltonEtnyreTraynor24}; moreover, since the ruling curves intersect the dividing curves non-trivially and the framing they get from the $\partial N_L$ and from a Seifert surface agree (because they have slope $0$) we know that their Thurston-Bennequin invariant is negative. Thus, if $\Sigma$ is a surface with $\partial \Sigma$ a ruling curve, then we can apply Proposition~\ref{maindestab} to find a bypass for $\partial N_L$ along the ruling curve. Attaching the bypass will result in a new solid torus $N$ containing $N_L$ with dividing slope $\infty$ (recall attaching a bypass to a torus with dividing slope $n>0$ along a ruling curve of slope $0$ will result in a convex torus with dividing slope given by the number clockwise of $n$ and anti-clockwise of $0$ in the Farey tesselation that is closest to $0$ and has an edge to $n$, and this is $\infty$). 

Now we note that inside $N\setminus N_L$ we can find convex tori $T_i$ for all integers $i>n$ with two dividing curves of slope $i$. Let $N_m$ be the solid tori that the $T_i$ bound. The ordering of the $T_i$ show that $N_i\subset N_{i'}$ whenever $i\leq i'$. By Corollary~\ref{detleg} we know the $N_i$ are standard neighborhoods of Legendrian knots $L_i$. We can take $N_n=N_L$ and so $L=L_n$. Now, since each $N_{i+1}\setminus N_i$ is a basic slice, we see that $L_i$ is a stabilization of $L_{i+1}$. Moreover, since the original basic slice was positive, we see that $N_{i+1}\setminus N_i$ is a positive basic slice. Thus $\rot(L_{i+1})-\rot(L_i)=1$ and we see that $L_i$ is a negative stablization of $L_{i+1}$. 
\end{proof}

%%%%%%%%%%%%%%%%%%%%%%%%%%%%%%%%%%%%
\section{Transverse knots with fixed self-linking number}
%%%%%%%%%%%%%%%%%%%%%%%%%%%%%%%%%%%%
In this section, we prove Theorem~\ref{detablizetransverse}, which says that for a null-homologous smooth knot type $K$ in a tight contact manifold $(M,\xi)$, there are only finitely many transverse representatives of $K$ with a fixed self-linking number. 
\begin{proof}[Proof of Theorem~\ref{detablizetransverse}]
Recall from Section~\ref{approx} that the classification of transverse knots in a knot type $K$ is the same as the negative stable classification of Legendrian knots. So one can understand transverse knots by understanding highly negatively stablized Legendrian knots. From Theorem~\ref{maindestab} we see that any transverse knot has a Legendrian approximation with rotation number $\chi(\Sigma)$ (where $\Sigma$ is a minimal genus Seifert surface for $K$). Given this, the self-linking number of a transverse realization of $K$ is determined by the Thurston-Bennequin invariant of a Legendrian approximation with rotation number $\chi(\Sigma)$. As noted in Section~\ref{destabsection}, the result of Colin, Giroux, and Honda \cite{ColinGirouxHonda09} says that there are only finitely many Legendrian knots with a fixed Thurston-Bennequin invariant and rotation number. Thus, there can only be finitely many transverse knots with a fixed self-linking number. 
\end{proof}

%%%%%%%%%%%%%%%%%%%%%%%%%%%%%%%%%%%%
\section{Neighborhoods of transverse knots}
%%%%%%%%%%%%%%%%%%%%%%%%%%%%%%%%%%%%
This section is devoted to the proof of Theorem~\ref{nbhdoftransverse} that says given a contact $3$-manifold  $(M,\xi)$ and a null-homologous knot type $K$ in a manifold $M$ and an integer $n$, there is a rational number $r$ such that any standard neighborhood with boundary slope $s<r$ of any transverse knot $T$ in the knot type $K$ with $\self(T)\geq n$ is unique up to contact isotopy. 

 To prove this theorem we first recall the well-known analogous result for Legendrian knots. 
\begin{lemma}\label{uniqueLegneigh}
Let $(M,\xi)$ be a contact manifold. Given two isotopic Legendrianian knots $L_0$ and $L_1$ and a standard neighborhood $N_i$ of $L_i$ for which the characteristic foliations on $\partial N_0$ and $\partial N_1$ agree, there is an ambient contact isotopy of $(M,\xi)$ taking $N_0$ to $N_1$. 
\end{lemma}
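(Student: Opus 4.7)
The plan is to reduce the statement, via the Legendrian isotopy extension theorem, to the case $L_0 = L_1 = L$, and then to argue that two standard neighborhoods of a single Legendrian knot with matching boundary characteristic foliations are related by an ambient contact isotopy of $(M,\xi)$. Legendrian isotopy extension provides a contact isotopy $\psi_t$ with $\psi_1(L_0) = L_1$; after replacing $N_0$ by $\psi_1(N_0)$, one may assume $L_0 = L_1 = L$ and that $N_0, N_1$ are both standard neighborhoods of $L$ whose boundaries are convex tori carrying the prescribed linear characteristic foliation of slope $s$.

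Next I would exploit the nested structure of the standard model $\widehat N_s$: the sub-tori $\widehat N_{s'}$ shrink to the core as $s'\to -\infty$, so one can find a standard neighborhood $N$ of $L$ with boundary slope $s'\ll s$ that is contained in $N_0\cap N_1$. The regions $A_i := N_i\setminus \interior(N)$ are then tight contact structures on $T^2\times[0,1]$ with convex boundary, inner slope $s'$ and outer slope $s$; moreover, each is minimally twisting, since it arises as a restriction of the standard model, in which the slope of $\{r = \text{const}\}$ varies monotonically with the radial coordinate. By Honda's classification of tight contact structures on the thickened torus, $A_0$ and $A_1$ are contactomorphic via a map that is the identity on $\partial N$ and carries $\partial N_0$ to $\partial N_1$ identifying their characteristic foliations.

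The main obstacle is promoting this contactomorphism to an \emph{ambient} contact isotopy of $M$. For this I would first invoke Giroux's contact neighborhood theorem for convex surfaces: since $\partial N_0$ and $\partial N_1$ are convex tori whose characteristic foliations agree, their germs of contact structure are equivalent, and after a small preliminary perturbation making the two tori disjoint, one can realize the identification of $\partial N_0$ with $\partial N_1$ as a contact isotopy of $M$ supported in a small neighborhood of $\partial N_0\cup \partial N_1$. Once the boundaries coincide, the thickened-torus classification — now applied rel matching boundaries — yields a contact isotopy supported in $N_0\cup N_1$ that is the identity on $N$ and outside a slight enlargement and that carries $N_0$ to $N_1$. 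Concatenating with $\psi_t$ from the first step produces the desired ambient contact isotopy.
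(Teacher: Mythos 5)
Your overall outline --- reduce to $L_0=L_1=L$ by the Legendrian isotopy extension theorem, then show a fixed Legendrian knot has a canonical standard neighborhood via Honda's classification results --- is exactly the route the paper sketches. But two of your steps are off. First, you have imported the \emph{transverse} model into the Legendrian setting: the nested tori $\widehat N_{s'}$ whose boundary slopes tend to $-\infty$ as the radius shrinks live in $\ker(\cos r\, d\phi + r\sin r\, d\theta)$, not in the Legendrian model $\ker(dz-y\,d\phi)$. In the Legendrian model every concentric sub-torus is convex with two dividing curves of the \emph{same} slope (the contact framing), and its characteristic foliation is not linear. So there is no standard neighborhood of $L$ ``with boundary slope $s'\ll s$''; the layers $A_i=N_i\setminus\interior(N)$ have equal inner and outer dividing slopes and are non-rotative ($I$-invariant) rather than thickened tori interpolating between two distinct slopes. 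The classification still identifies $A_0$ with $A_1$, but the argument has to be restated in these terms.

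The more serious gap is the promotion from ``contactomorphic'' to ``ambient contact isotopic.'' Your claim that, because $\partial N_0$ and $\partial N_1$ are convex tori with the same characteristic foliation, the identification can be realized by a contact isotopy of $M$ supported near $\partial N_0\cup\partial N_1$, is false in general: Giroux's theorem only gives contactomorphic \emph{germs}, and a compactly supported isotopy near a disjoint union cannot carry one component to the other. Indeed, Theorem~\ref{ex4} of this paper exhibits infinitely many tori with identical characteristic foliations bounding standard neighborhoods of isotopic knots that are pairwise \emph{not} ambient contact isotopic, so matching boundary data alone cannot suffice; one must use that both tori cobound controlled layers with a common core neighborhood of the same $L$. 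Even then, the classification of tight contact structures on $T^2\times[0,1]$ and on the solid torus only produces a self-contactomorphism of a standard neighborhood carrying one sub-torus to the other; to convert this into an ambient contact isotopy one needs the nontrivial fact that every contactomorphism of the standard solid torus with two longitudinal dividing curves is contact isotopic to the identity (Lemma~\ref{lem1}, from \cite{EtnyreVertesi18, Vogel2016}). Your proposal never engages with this input, and without it the final step does not close.
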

This is implicit in \cite{EtnyreHonda01b, Honda00a} as well as other papers, and a proof can be found in \cite{EtnyreTosunPre24}. The lemma is easy to prove. Specifically, one first observes that two Legendrian isotopic knots are ambiently contact isotopic \cite{Eliashberg1993, Etnyre05} and then that a fixed Legendrian knot has a canonical standard neighborhood which may be proven using the ideas in \cite{Honda00a}. 

Another ingredient we need concerns the contactomorphism of standard neighborhoods of Legendrian knots and a useful corollary of that. 
\begin{lemma}\label{lem1}
Consider the contact manifold $(S^1\times D^2, \xi)$ with convex boundary having two longitudinal dividing curves. Any contactomorphism of $(S^1\times D^2, \xi)$ is contact isotopic to the identity.
\end{lemma}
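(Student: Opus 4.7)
The plan is to reduce any contactomorphism $\phi$ to the identity through a sequence of contact isotopies, peeling off concentric regions using the rigidity of standard neighborhoods of Legendrian knots and the classification of tight contact structures on the solid torus and on $T^2\times I$.

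First, I would identify the core circle $L = S^1\times\{0\}$ as a Legendrian knot whose standard neighborhood is $(S^1\times D^2,\xi)$ itself. By the Giroux--Honda classification, $\xi$ is the unique tight contact structure on $S^1\times D^2$ with two longitudinal dividing curves on the convex boundary, and the Legendrian core $L$ is determined up to contact isotopy as the unique Legendrian representative of the smooth core isotopy class realizing the maximal Thurston--Bennequin number compatible with the boundary data. Consequently, for any contactomorphism $\phi$, the image $\phi(L)$ is Legendrian isotopic to $L$. Since an ambient Legendrian isotopy extends to an ambient contact isotopy, I may post-compose $\phi$ with such an isotopy and assume $\phi(L) = L$ as oriented sets.

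Next, I would pick a small standard neighborhood $N\subset S^1\times D^2$ of $L$ and apply Lemma~\ref{uniqueLegneigh} to the pair of neighborhoods $N$ and $\phi(N)$ of $L$ (after a preliminary isotopy arranging the two characteristic foliations on the boundaries to agree). This yields an ambient contact isotopy after which $\phi(N) = N$. A further localized contact isotopy, using a Moser-type argument in the explicit model $(S^1\times D^2_\epsilon,\ker(dz - y\,d\phi))$, then lets me assume $\phi$ is the identity on a still smaller standard neighborhood $N' \subset N$ of $L$.

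Finally, I would work on the complement $T^2\times I \cong (S^1\times D^2)\setminus\interior(N')$. The inherited tight contact structure is, by Honda's classification, the unique minimally twisting tight structure with longitudinal dividing slope on each boundary torus. Since $\phi$ is already the identity on the inner boundary of this region and preserves the convex outer boundary with its dividing set, I would apply the uniqueness of minimally twisting tight contact structures on $T^2\times I$ rel boundary, together with a standard parametric argument, to produce a contact isotopy (fixing a neighborhood of $N'$) carrying $\phi$ to the identity. The main obstacle is this last step: one must rule out nontrivial mapping-class contributions coming from Dehn twists along meridional disks of $S^1\times D^2$ and from the orientation-reversing involution of the $S^1$-factor. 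These are ruled out by the tightness hypothesis together with the fact that $\phi$ is already the identity on the inner boundary, since any nontrivial such class would either introduce Giroux torsion or disagree with the identity near the inner boundary.
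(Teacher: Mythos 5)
The paper does not prove this lemma from scratch: it is quoted from \cite[Section~2.2]{EtnyreVertesi18} and \cite[Theorem~2.36]{Vogel2016}, and the content of those references is exactly the step your argument treats as routine. Your skeleton (normalize the core, then a standard neighborhood of it, then work on the complementary $T^2\times[0,1]$) is reasonable, but the last step contains essentially all of the difficulty, and the tools you invoke cannot carry it. The Giroux--Honda classifications of tight contact structures on $S^1\times D^2$ and on $T^2\times[0,1]$ are statements about $\pi_0$ of the space of tight contact structures with fixed boundary data. To show a contactomorphism $\phi$ is contact isotopic to the identity one must first know $\phi$ is \emph{smoothly} isotopic to the identity (ruling out the nontrivial smooth mapping classes of the solid torus), and then contract the resulting loop $(\phi_t)_*\xi$ of contact structures based at $\xi$ before applying parametric Gray stability; this requires simple connectivity, not just connectivity, of the relevant space of tight contact structures rel boundary. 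No $\pi_0$-level classification, and no ``uniqueness of the minimally twisting layer rel boundary,'' yields this. The honest input is Eliashberg's theorem on the space of tight contact structures on the ball together with its extension to solid tori in \cite{Vogel2016}; your ``standard parametric argument'' is a placeholder for that theorem rather than a derivation of it.

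Two more localized problems. First, in your opening step you pass from ``$\phi(L)$ is Legendrian isotopic to $L$'' to ``$\phi(L)=L$ as oriented sets,'' but a contactomorphism of the model, e.g. $(\theta,y,z)\mapsto(-\theta,-y,z)$ for $\ker(dz-y\,d\theta)$, reverses the orientation of the core, and no ambient isotopy of the solid torus carries the core to itself with reversed orientation; this case must be excluded by hypothesis (as it effectively is in the paper's applications, where the contactomorphism is the identity near the boundary), not deferred to the end of the argument, since your steps 2 and 3 already presuppose it is handled. Second, your closing claim that meridional Dehn twists are excluded because they ``would introduce Giroux torsion'' conflates contact structures with contactomorphisms: a contactomorphism preserves $\xi$ by definition, so nothing is introduced; whether such a twist class can be realized by a contactomorphism and whether it is contact isotopic to the identity is again the homotopical question above. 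Note also that your step 1 quietly invokes the uniqueness of the maximal-twisting Legendrian core, which in this paper is Corollary~\ref{detleg} and is itself deduced from Lemma~\ref{lem1}; to avoid circularity you would need to prove that uniqueness independently.
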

The proof of this lemma follows from \cite[Section~2.2]{EtnyreVertesi18}, see also \cite[Theorem~2.36]{Vogel2016}. 
A simple corollary of this lemma is the following that will be used in the proofs below.
\begin{corollary}\label{detleg}
A solid torus in a contact manifold $(M,\xi)$ with convex boundary having two longitudinal dividing curves is the standard neighborhood of a unique Legendrian knot. 
\end{corollary}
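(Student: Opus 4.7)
The plan is to deduce the corollary quickly from Lemma~\ref{lem1}, splitting the statement into existence and uniqueness claims.

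For existence, I would observe that in order for $S$ to be a standard neighborhood of any Legendrian knot, the contact structure on $S$ must be tight. By Honda's classification of tight contact structures on $S^1 \times D^2$ with prescribed convex boundary, there is a unique tight contact structure on a solid torus whose convex boundary carries two longitudinal dividing curves, and this unique model agrees with the standard Legendrian-neighborhood model $S^1 \times D^2_\epsilon$ with $\ker(dz - y\, d\phi)$. Pulling back the core of this model under any identification with $S$ produces a Legendrian knot $L \subset S$ for which $S$ is a standard neighborhood.

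For uniqueness, suppose $L_0, L_1 \subset S$ are two Legendrian knots each of which has $S$ as its standard neighborhood. By definition there exist contactomorphisms $\psi_i$ from $S$ to the standard model sending $L_i$ to the core. The composition $\Phi := \psi_1^{-1} \circ \psi_0 : S \to S$ is then a self-contactomorphism of $S$ with $\Phi(L_0) = L_1$. By Lemma~\ref{lem1}, $\Phi$ is contact isotopic to the identity through contactomorphisms $\Phi_t$ of $S$; the path $t \mapsto \Phi_t(L_0)$ is then a Legendrian isotopy from $L_0$ to $L_1$. Extending this isotopy to an ambient contact isotopy of $M$ via the standard Legendrian isotopy extension theorem finishes the argument.

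The heart of the argument is Lemma~\ref{lem1}; Corollary~\ref{detleg} is really just a repackaging of that lemma in language adapted to standard neighborhoods. I do not foresee any serious obstacle in carrying out the plan: the only additional input beyond Lemma~\ref{lem1} is Honda's uniqueness for the tight contact structure with the prescribed boundary data, which is standard and already cited in the paper's background.
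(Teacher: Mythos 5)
Your proof is correct and follows exactly the route the paper intends: the paper offers no separate argument, simply presenting the statement as a simple corollary of Lemma~\ref{lem1}, and your two steps --- existence via Honda's uniqueness of the tight contact structure on a solid torus with two longitudinal dividing curves, and uniqueness by conjugating the two core identifications into a self-contactomorphism and applying Lemma~\ref{lem1} --- are the natural way to fill that in. The only caveat, which applies equally to the corollary as stated in the paper, is that the restriction of $\xi$ to the solid torus must be tight for the existence half to apply; you correctly flag this as a necessary condition, and it holds in every application in the paper since the tori in question sit inside standard neighborhoods of Legendrian or transverse knots.
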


%We next consider the uniqueness of Legendrian approximations of transverse knots. 
%\begin{lemma}
%Let $(M,\xi)$ be a contact $3$-manifold. 
%Given a null-homologous knot type $K$ in a manifold $M$, there is an integer $n$ such that given any transverse representative $T$ of $K$, any two Legendrian approximations of $T$ with the same Thurston-Bennequin invariant less than $n$ are Legendrian isotopic. 
%\end{lemma}

We now establish some preliminary results for the proof of Theorem~\ref{nbhdoftransverse}.
\begin{lemma}\label{mainhelper}
Suppose that the two standard neighborhoods $N_s$ and $N'_s$ of transversely isotopic transverse knots $T$ and $T'$  contained in a standard neighborhood of Legendrian knot approximations $L$ and $L'$ that are Legendrian isotopic, then $N_s$ is ambient contact isotopic to $N_s'$. 
\end{lemma}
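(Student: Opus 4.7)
The approach is to use the Legendrian isotopy between $L$ and $L'$ to arrange that $N_s$ and $N'_s$ lie in a common standard Legendrian neighborhood, and then to show that within such a neighborhood any two standard transverse subneighborhoods of slope $s$ are ambient contact isotopic. Since $L$ and $L'$ are Legendrian isotopic, by the isotopy extension theorem for Legendrian knots together with Lemma~\ref{uniqueLegneigh}---applied after shrinking one of $N_L$ and $N_{L'}$ so the characteristic foliations on their boundaries agree---there is an ambient contact isotopy of $(M,\xi)$ taking $N_L$ to $N_{L'}$. Applying this isotopy to $N_s$, we reduce to the case $L=L'$, $N_L = N_{L'}$, and both $N_s, N'_s \subset N_L$. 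It then suffices to produce an ambient contact isotopy of $M$, supported inside $N_L$, carrying $N_s$ to $N'_s$.

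For this, fix a contactomorphism of $N_L$ with the standard model described in Section~\ref{nbhds}, and let $\widehat N_s$ denote the concentric standard neighborhood of slope $s$ in this model. I will show $N_s$ and $N'_s$ are each isotopic to $\widehat N_s$ by a contact isotopy supported in $N_L$. The complement $N_L \setminus \interior N_s$ is a thickened torus with two convex boundary tori, each carrying two dividing curves: of slope $s$ on the inner boundary and of the slope of $\partial N_L$ on the outer. Because $N_L$ is tight and $s$ is less than the slope of $\partial N_L$, this complement is minimally twisting, and by Honda's classification of tight contact structures on $T^2\times I$ with fixed convex boundary it is contactomorphic to $N_L \setminus \interior \widehat N_s$ via a map that restricts to the identity on $\partial N_L$. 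The removed solid tori $N_s$ and $\widehat N_s$ are themselves tight solid tori with matching convex boundary, hence contactomorphic via a map agreeing with the previous one on the inner boundary; gluing produces a contactomorphism $\Phi$ of $N_L$ that is the identity on $\partial N_L$ and satisfies $\Phi(N_s) = \widehat N_s$. By Lemma~\ref{lem1}, $\Phi$ is contact isotopic to the identity through contactomorphisms of $N_L$; taming this isotopy in a collar of $\partial N_L$ yields one supported in $\interior N_L$, which extends by the identity across $M \setminus N_L$ to the required ambient contact isotopy from $N_s$ to $\widehat N_s$. The same argument applied to $N'_s$ and composition yields the desired isotopy from $N_s$ to $N'_s$.

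The main obstacle I anticipate is the rigidity input used for the thickened-torus complement: checking that it really is minimally twisting (so that $s$ lies in the correct range relative to the slope of $\partial N_L$ and no intermediate convex torus overshoots) and then invoking the appropriate uniqueness in Honda's $T^2\times I$ classification in a way that allows the matching contactomorphisms to be arranged to fix $\partial N_L$. Once this is in place, Lemma~\ref{lem1} packages the contactomorphism of $N_L$ as an isotopy, and the extension to $(M,\xi)$ is automatic.
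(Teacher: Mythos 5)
Your proposal is correct and follows essentially the same route as the paper: reduce to a common Legendrian neighborhood $N_L$ via Lemma~\ref{uniqueLegneigh}, use the classification of tight contact structures on solid tori to produce a self-contactomorphism of $N_L$ (rel boundary) carrying one transverse neighborhood to the other, and then invoke Lemma~\ref{lem1} to upgrade that contactomorphism to an ambient contact isotopy. Your version merely unpacks the solid-torus classification step into an explicit $T^2\times I$ layer argument with an intermediate model $\widehat N_s$, which the paper leaves implicit.
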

\begin{proof}
Suppose $T$ and $T'$ are transversely isotopic and have standard neighborhoods $N_s$ and $N'_s$, respectively, and we know that there are Legendrian knots $L$ and $L'$ such that $N_s$ is contained in the standard neighborhood $N_L$ of $L$ and similarly for $N'_s$ and $N_{L'}$. By hypothesis, $L$ and $L'$ are Legendrian isotopic. Thus, by Lemma~\ref{uniqueLegneigh}, we know that there is an ambient contact isotopy taking $N_L$ to $N_{L'}$.  Thus, we can assume that $N_s$ and $N'_s$ are both contained in $N_L$. From the classification of tight contact structures on solid tori \cite{Giroux00, Honda00a}, there is a contactomorphism from $N_L$ to itself, that is the identity near the boundary, taking $N_s$ to $N'_s$, but it is known that such a contactomorphism of $N_L$ is contact isotopic to the identity by Lemma~\ref{lem1}. Thus, we have constructed the contact isotopy taking $N_s$ to $N'_s$. (If one wanted to, one could assume that under this isotopy, $T$ is taken to $T'$ as well.)
\end{proof}

\begin{lemma}\label{part1}
Let $K$ be a null-homologous knot type in a contact manifold $(M,\xi)$. If there exist integers $n$ and $m$ such that any transverse representatives $T$ of $K$ with $\self(T)=n$ has a unique Legendrian approximation $L_{m'}$ with $\tb(L_{m'})=m'$ for any $m'\leq m$, then any such $T$ has a well-defined standard neighborhood with any slope $s<\min\{m, -2g(K)+1+n\}$. 
\end{lemma}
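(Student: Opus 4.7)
The plan is to invoke Lemma~\ref{mainhelper}: if $N_s\subset N_L$ and $N'_s\subset N_{L'}$ for Legendrian approximations $L,L'$ of the underlying transverse knots that are Legendrian isotopic, then $N_s$ is ambient contact isotopic to $N'_s$. After applying the ambient contact isotopy carrying $T$ to $T'$, I reduce to showing that any standard neighborhood $N_s$ of $T$ with $s<\min\{m,-2g(K)+1+n\}$ is contained in a standard Legendrian neighborhood $N_L$ of a Legendrian approximation $L$ of $T$ with $\tb(L)\leq m$. The hypothesis of the lemma then identifies $L$ and $L'$ up to Legendrian isotopy, and Lemma~\ref{mainhelper} finishes the proof.

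First I choose an integer $k$ with $s<k\leq\min\{m,-2g(K)+1+n\}$; such a $k$ exists because both bounds are integers strictly larger than $s$. Note that the classical Bennequin bound $n\leq 2g(K)-1$ applied to $T$ forces $-2g(K)+1+n\leq 0$, so $k\leq 0$.

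The central step is the construction of $N_L$. Consider the family $\mathcal F$ of standard Legendrian neighborhoods $V\subset M$ containing $N_s$ whose core is a Legendrian approximation of $T$. A convex perturbation of an outward push-off of $\partial N_s$, followed by Corollary~\ref{detleg}, produces one such $V$, so $\mathcal F$ is nonempty; the Bennequin inequality bounds the core Thurston-Bennequin invariants of elements of $\mathcal F$ from above, so we may pick $V_{\max}\in\mathcal F$ of maximal core $\tb=s_{\max}$. I claim $s_{\max}\geq k$. Otherwise the core $L_{\max}$ of $V_{\max}$ satisfies $\tb(L_{\max})=s_{\max}<k\leq -2g(K)+1+n$, and since any Legendrian approximation of $T$ has $\rot=\tb-\self(T)=\tb-n$, we get
\[
\rot(L_{\max})=s_{\max}-n<1-2g(K)=\chi(\Sigma)
\]
for any Seifert surface $\Sigma$. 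Combined with $s_{\max}<0$, Proposition~\ref{maindestab} furnishes a negative bypass for $L_{\max}$ on $\Sigma$, realizing $L_{\max}=S_-(L')$ with $\tb(L')=s_{\max}+1$. Because $(S_-(L'))^t=(L')^t$, the knot $L'$ is again a Legendrian approximation of $T$, and attaching the bypass enlarges $V_{\max}$ to a standard Legendrian neighborhood $N_{L'}\supset V_{\max}\supset N_s$, placing $N_{L'}\in\mathcal F$ with larger core $\tb$ - a contradiction. Hence $s_{\max}\geq k$, and applying the classification of tight contact structures on the $T^2\times I$ region between a convex perturbation of $\partial N_s$ and $\partial V_{\max}$ produces an intermediate convex torus of dividing slope $k$; by Corollary~\ref{detleg} this torus bounds the desired $N_L$ with $\tb(L)=k$ and $N_s\subset N_L\subset V_{\max}$.

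Repeating the construction for $N'_s$ produces $L'$ with $\tb(L')=k$ and $N'_s\subset N_{L'}$; the uniqueness hypothesis of the lemma identifies $L$ and $L'$ up to Legendrian isotopy, and Lemma~\ref{mainhelper} completes the proof. The main obstacle is the destabilization step: verifying that the sign convention of Proposition~\ref{maindestab} yields exactly a negative destabilization (so that $L'$ remains a Legendrian approximation of $T$ via $(S_-(L'))^t=(L')^t$), handling the edge case $s_{\max}=-1$ where the hypothesis $\tb(L_{\max})<-1$ of Proposition~\ref{maindestab} is just missed, and checking carefully that the bypass attachment does in fact enlarge the specific solid torus $V_{\max}$ (rather than just producing an abstractly larger standard neighborhood of $L'$) so that the maximality contradiction is valid.
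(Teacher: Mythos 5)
Your high-level skeleton agrees with the paper's: reduce via Lemma~\ref{mainhelper} to showing that $N_s$ sits inside the standard neighborhood $N_L$ of a Legendrian approximation $L$ with $\tb(L)$ an integer at most $m$, then invoke the uniqueness hypothesis. But there is a genuine gap at the base of your maximization argument. You claim $\mathcal F\neq\emptyset$ because ``a convex perturbation of an outward push-off of $\partial N_s$, followed by Corollary~\ref{detleg}, produces one such $V$.'' A convex perturbation of a push-off of $\partial N_s$ has dividing slope equal to (or only slightly larger than) $s$, which is in general not an integer; its dividing curves are therefore not longitudinal, and Corollary~\ref{detleg} does not apply. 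Producing \emph{any} solid torus with longitudinal dividing curves containing $N_s$ --- i.e., getting from slope $s$ up to the next integer slope --- is precisely the hard content of the lemma, and it is where all of the paper's work lives: one puts slope-$0$ ruling curves on the convex thickening of $\partial N_s$, verifies the hypotheses of Proposition~\ref{maindestab} \emph{for the ruling curve} (the rotation-number bound requires the Euler-class/basic-slice computation using the fact that the region between $N_{\lfloor s\rfloor}$ and $N_{s'}$ lies in a standard transverse neighborhood), and attaches the resulting bypasses to $\partial N_{s'}$ repeatedly until the dividing slope reaches $\lceil s\rceil$. None of this appears in your proposal, so the existence of $V_{\max}$ is assumed rather than proved.

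Granting $\mathcal F\neq\emptyset$, your maximality argument (destabilize the core $L_{\max}$ directly via Proposition~\ref{maindestab} to contradict maximality) is a legitimate variant of the paper's climb-by-bypasses, and your sign bookkeeping ($\rot(L_{\max})<\chi(\Sigma)$ gives a negative bypass, so $L_{\max}=S_-(L')$ and $L'$ is still an approximation of $T$) is consistent with the paper. However, the edge case you flag --- $s_{\max}=-1$ with $k=0$, where the hypothesis $\tb<-1$ of Proposition~\ref{maindestab} fails --- is reachable and is not resolved; the paper sidesteps it by applying the destabilization criterion to the slope-$0$ ruling curve, whose Thurston--Bennequin invariant is controlled by its intersection with the dividing set, at the cost of the Euler-class computation you omit. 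As written, then, the proposal is missing the central step and leaves an unhandled boundary case.
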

\begin{proof}
By Lemma~\ref{mainhelper}, the proof will be complete if we can show that given a standard neighborhood $N_s$ of a transverse knot $T$ in the knot type $K$ with $\self(T)=n$ and $s<\min\{m, -2g(K)+1+n\}$, then $N_s$ is contained in the standard neighborhood $N_L$ of a Legendrian representative of $K$ with $\tb(L)=\lceil s\rceil$, the least integer greater than $s$. %Assuming this for the moment, we finish the proof of the lemma. Suppose $T$ and $T'$ are transversely isotopic and have standard neighborhoods $N_s$ and $N'_s$ respectively. From the claim above, we know that there are Legendrian knots $L$ and $L'$ such that $N_s$ is contained in the standard neighborhood $N_L$ of $L$ and similarly for $N_s$ and $N_{L'}$. Since $L$ and $L'$ are Legendrian approximations and their Thurston-Bennequin invariants are the same and less than $m$, they must be Legendrian isotopic. Thus, we know that there is an ambient contact isotopy taking $N_L$ to $N_{L'}$.  Thus, we can assume that $N_s$ and $N'_s$ are both contained in $N_L$. From the classification of tight contact structures on solid tori \cite{Giroux00, Honda00a}, there is a contactomorphism from $N_L$ to itself, that is the identity near the boundary, taking $N_s$ to $N'_s$, but it is known that such a contactomorphism of $N_L$ is contact isotopic to the identity by Lemma~\ref{lem1}. Thus, we have constructed the contact isotopy taking $N_s$ to $N'_s$. (If one wanted to, one could assume that under this isotopy, $T$ is taken to $T'$ as well.)

To see that this is true, given $N_s$, we know $\partial N_s$ has a standard neighborhood $T^2\times [-1,1]$ where the contact planes are tangent to the $[-1,1]$ factor and rotating in a left-handed manner as the interval is traversed. From this we see that $N_s$ is contained in $N_{s'}$ with $s'>s$ and we can assume that $s'<\lceil s\rceil$. We can perturb $\partial N_{s'}$ to be convex with two dividing curves of slope $s'$ and we can assume that $N_s$ is still in the interior of $N_{s'}$. Put $\partial N_{s'}$ in standard form with ruling curves of slope $0$, that is a ruling curve bounds a surface $\Sigma$ in the complement of $N_{s'}$. 

We now claim that $\rot(\partial \Sigma)<-\chi(\Sigma)$. To see this, notice that inside $N_s\subset N_{s'}$ there is a standard neighborhood $N_{\lfloor s\rfloor}$ of $T$. We can perturb $\partial N_{\lfloor s\rfloor}$ to be convex with two dividing curves of slope ${\lfloor s\rfloor}$. By Corollary~\ref{detleg} this will be the standard neighborhood of a Legendrian knot $L'$ whose transverse push-off is $T$. Moreover, we know that $\tb(L')={\lfloor s\rfloor}<-2g(K)+1+n$ and thus $\rot(L')=\tb(L')-\self(T)<-2g(K)+1$. Now the contact structure on $N_{s'}\setminus N_{\lfloor s\rfloor}$ is a minimally twisting contact structure on $T^2\times [0,1]$ that is contained in a standard neighborhood of a transverse knot. This means that the contact structure is universally tight and, moreover, if $A$ is a longitudinal annulus with one boundary component a ruling curve on $\partial N_{s'}$ and the other a ruling curve on $\partial N_{\lfloor s\rfloor}$, then the Euler class of the contact structure evaluated on this annulus (orieted as $L'\times [0,1]$) is positive due to the fact that basic slices inside standard neighborhoods of transverse knots are negative, see \cite[Section~3.6]{EtnyreHonda01b}. Thus we see that $\rot(L')-\rot(\partial \Sigma)>0$ and hence $\rot(\partial \Sigma)<-2g(K)-1= \chi(\Sigma)$ as claimed. 

Now Proposition~\ref{maindestab} gives is a bypass for $\partial N_{s'}$. Attaching the bypass will give a new solid torus with dividing slope larger than $s'$. We can continue to attach bypasses until we obtain a solid torus with convex bounday having $2$ dividing curves of slope $\lceil s\rceil$. This will be the standard neighborhood of a Legendrian knot $L$ with transverse push-off $T$ and this solid torus contains $N_s$ by construction. 
\end{proof}

As the next step in the proof of Theorem~\ref{nbhdoftransverse} we establish Lemma~\ref{part2}. Recall this lemma says that give a null-homologous knot type $K$ in a contact manifold $(M,\xi)$ and an integer $n$, there is an integer $m$ such that any two Legendrian approximations of a transverse knot $T$ in the knot type $K$ with $\self(T)>n$ with the same Thurston-Bennequin invariant less than $m$ are Legendrian isotopic. 

\begin{proof}[Proof of Lemma~\ref{part2}]
We begin by assuming that $\xi$ is either tight or if $\xi$ is overtwisted, then we consider non-loose transverse knots. %The case of loose transverse knots will be dealt with below. 
We fix a knot type $K$. We will prove the lemma for transverse representatives of $K$ with $\self(K)=n$, since the self-linking number of transverse representatives we are considering is bounded above by $-\chi(K)$ this will establish the lemma.

By Proposition~\ref{maindestab} we know that there is a negative integer $b$ such that if $L$ is a Legendrian knot in the knot type $K$ and $\rot(L)<b$, then $L$ destabilizes. Now if $L$ is a Legendrian approximation of $T$ then $\tb(L)=\rot(L)+n$. So if $\tb(L)<b+n$, then $L$ destablizes. Thus all transverse representatives of $K$ with $\self=n$ have Legendrian approximations with $\tb=b+n$ and $\rot=b$. 

By the work of Colin, Giroux, and Honda \cite{ColinGirouxHonda09}, there are a finite number of Legendrian knots (in tight contact structures or non-loose in overtwisted contact structures with no Giroux torsion in their complements) with these fixed $\tb$ and $\rot$. So in $\xi$ there are a finite number of Legendrian representatives of $K$ with $\tb=b+n$ and $\rot=b$. If any two of them become isotopic after negative stabilization, then this will happen after a finite number of stabilizations. Thus there is some $k$ such that for distinct Legendrian approximations of transverse knots with $\self=n$ in $(M,\xi)$ with $\tb<b+n-k$ and $\rot=\tb-n$ remain distinct after any number of negative stabilizations. This means that their transverse push-offs are distinct and hence any transverse representative of $K$ with self-linking number $n$ has a unique Legendrian approximation with any $\tb<b+n-k$.
%{\color{red} ADD LOOSE CASE}
\end{proof}

We are now ready for the proof of one of our main results. 
\begin{proof}[Proof of Theorem~\ref{nbhdoftransverse}]
The theorem is an immediate follows from Lemmas~\ref{part1} and~\ref{part2}.
\end{proof}

%%%%%%%%%%%%%%%%%%%%%%%%%%%%%%%%%%%%
\section{Examples of standard neighborhoods of transverse knots}
%%%%%%%%%%%%%%%%%%%%%%%%%%%%%%%%%%%%
In this section we will establish our estimates on the size of standard neighborhoods of transverse knots that are well-defined up to ambient contact isotopy and explore examples of unique and non-unique standard neighborhoods of transverse knots. 
%%%%%%%----------------------------------------------------------------------------
\subsection{General results on the size of well-defined neighborhoods of transverse knots}\label{genest}
%%%%%%%----------------------------------------------------------------------------
We begin with the proof of Theorem~\ref{main2}. We recall that the theorem says that
if $K$ is a Legendrian simple knot in a tight contact manifold $(M,\xi)$ and $K=\partial \Sigma$ for some embedded surface $\Sigma$, then a standard neighborhood $N_s$ of a transverse knot $T$ in the knot type $K$ is unique up to contact isotopy if $s<\self(T)+\chi(\Sigma)$.
\begin{proof}[Proof of Theorem~\ref{main2}]
Let $N_s$ be a standard neighborhood of $T$ with $s<\self(T)+\chi(\Sigma)$. The argument in the last two paragraphs of the proof of Lemma~\ref{part1} shows that $N_s$ is contained in the standard neighborhood $N_L$ of a Legendrian realization of $K$ with $\tb(L)=\self(T)+\chi(\Sigma)$. The proof is now complete by Corollary~\ref {detleg} because we know that there is a unique Legendrian representative of $K$ with $\tb(L)=\self(T)+\chi(\Sigma)$ and $\rot(L)=\chi(\Sigma)$. 
\end{proof}

We now turn to the proof of Theorem~\ref{main3}, which says that 
if $K$ is Legendrian simple and uniformly thick, then any standard neighborhood of a transverse representative of $K$ is unique up to contact isotopy. 
\begin{proof}[Proof of Theorem~\ref{main3}]
Since every solid torus thickens to a neighborhood of a Legendrian representative of $K$ with maximal Thurston-Bennequin invariant, the result follows from Lemma~\ref{mainhelper}.
\end{proof}

We now address the possible standard neighborhoods of transverse knots in uniformly thick and Legendrian simple knot types. 
\begin{remark}\label{thealgorithm}
Suppose that $K$ is Legendrian simple and uniformly thick. From Theorem~\ref{main3} we know that neighborhoods of transverse representatives of $K$ are unique up to contact isotopy. To completely understand transverse neighborhoods, we are left to understand for which values of $s$ a transverse knot $T$ in the knot type $K$ has a standard neighborhood $N_s$. To this end, we consider the Legendrian mountain range for $K$ and the diagonal line $\self(T)=t-r$. Suppose that $t_T$ is the largest value of $t$ on this line. Then the correspondence between transverse knots and Legendrian approximations discussed in Section~\ref{approx} (coupled with uniform thickness) tells us that $T$ has standard neighrohoods for any $s<t_T$.
\end{remark}

%%%%%%%----------------------------------------------------------------------------
\subsection{Examples of standard neighborhoods of transverse knots}\label{examp}
%%%%%%%----------------------------------------------------------------------------
We will now consider standard neighborhoods of transverse knots in specific knot types. We begin with the $(2,-2n-1)$-torus knot $K$. We recall that $K$ is Legendrian simple, and the Legendrian classification is given by the mountain range in Figure~\ref{fig:mrneg22kp1}. See \cite{EtnyreHonda01b}.
 \begin{figure}[htb]{
\begin{overpic}%[grid,tics=10] 
{}
\put(163, 88){$2n$}
\put(17, 65){$-2n+1$}
\put(283, 65){$2n-1$}
\put(-43, 58){$-4n-2$}
\put(-43, 42){$-4n-3$}
\put(-43, 26){$-4n-4$}
%\put(-50, 43){$-5$}
\end{overpic}}
\caption{The mountain range for the positive $(2,-2n-1))$-torus knots.}
\label{fig:mrneg22kp1}
\end{figure}   
 
\begin{proof}[Proof of Theorem~\ref{ex1}]
Let $K$ be the $(2, -2n-1)$-torus knot. We recall that $K$ is uniformly thick \cite{EtnyreHonda05}. Thus by Theorem~\ref{main3} we know that every standard neighborhood of a transverse representative of $K$ is determined up to contact isotopy by the slope of the characteristic foliation on its boundary.  We are left to see what slopes can be realized by such neighborhoods. 

Suppose $T$ is a transverse representative of $K$ with $\self(T)=l \in \{-2n-3, -2n-5, \ldots,-6n-1\}$. Then we consider the line $t=r+l$ in the mountain range of $K$ shown in Figure~\ref{fig:mrneg22kp1}. (Recall our discussion at the end of Section~\ref{approx} about how to relate transverse knots to the mountain range of Legendrian knots.) We notice that for points in this set, the largest second coordinate is $-4n-2$. 
Now, by Lemma~\ref{subtori} we see that we can find standard neighborhoods $N_s$ of $T$ with any $s<-4n-2$. Now, suppose that there is a standard neighborhood $N_s$ of $T$ with $s\geq -4n-2$. If the slope is $-4n-2$, then in a neighborhood of $\partial N_s$ we can find a torus $T$ with linear foliation larger than $s$ and bounding a solid torus containing $N_s$. Thus, we can assume that $s>-4n-2$. By the uniform thickness hypothesis, we know that $N_s$ is contained in a solid torus $S$ that is a neighborhood of a Legendrian representative of $K$. This neighborhood has convex boundary with two dividing curves of slope $-4n-2$, which is less than $s$. Thus, $S\setminus N_s$ is a thickened torus, and we can perturb the back face to be convex with slope $s$. The classification of tight contact structures on such a thickened torus \cite{Giroux00, Honda00a} implies that one can find a convex torus parallel to the boundary with any dividing slope in $[s, -4n-2]$ (here we are thinking of this as an interval of slopes on the Farey graph, for the conventions we are using see \cite{EtnyreRoy21}). Since this interval contains $\infty$ we see that there is a meridional disk in $S$ with its boundary a Legendrian divide, and hence this is an overtwisted disk. Thus, such an $N_s$ could not exist. 

Now if $T$  is a transverse representative of $K$ with $\self(T)=2k-1$, for $k>3n$, then the same argument as above shows that $T$ has a standard neighborhood $N_s$ if and only if $s<-k-n-2$. 
\end{proof}

We now turn to the $(4,-9)$-torus knot $K$. We know that $K$ is Legendrian simple \cite{EtnyreHonda01b} and uniformly thick \cite{EtnyreHonda05}. The classification of Legendrian representatives of $K$ is given in Figure~\ref{fig:49mountain}. 
\begin{figure}[htb]{
\begin{overpic}%[grid,tics=10] 
{}
\put(178, 125){$0$}
\put(196, 125){$1$}
\put(214, 125){$2$}
\put(232, 125){$3$}
\put(250, 125){$4$}
\put(268, 125){$5$}
\put(286, 125){$6$}
\put(304, 125){$7$}
\put(322, 125){$8$}
\put(340, 125){$9$}
\put(155, 125){$-1$}
\put(137, 125){$-2$}
\put(119, 125){$-3$}
\put(101, 125){$-4$}
\put(83, 125){$-5$}
\put(65, 125){$-6$}
\put(47, 125){$-7$}
\put(29, 125){$-8$}
\put(11, 125){$-9$}
%\put(-15, 125){$\rot=$}
\put(-15, 113){$-36$}
\put(-15, 95){$-37$}
 \put(-15, 77){$-38$}
 \put(-15, 59){$-39$}
 \put(-15, 41){$-40$}
\end{overpic}}
\caption{The image of the map $\Psi$ for the $(4,-9)$-torus knot..}
\label{fig:49mountain}
\end{figure}
We are now ready to prove Theorem~\ref{ex2}.
\begin{proof}[Proof of Theorem~\ref{ex2}]
The proof is essentially the same as the proof of Theorem~\ref{ex1} using Figure~\ref{fig:49mountain} instead of Figure~\ref{fig:mrneg22kp1}.
\end{proof}

We now consider the unknot, which is not uniformly thick. 
\begin{proof}[Proof of Teorem~\ref{ex3}]
There is a unique Legendrian unknot $L$ with $\tb(L)=-1$ and $\rot(L)=0$, and all other Legendrian unknots are stabilizations of this one. We claim that any solid torus $S$ in the knot type of the unknot with boundary having a linear foliation of slope $s<-1$ is contained in a solid torus that is a standard neighborhood of $L$. Given this, we can use the proof of Theorem~\ref{ex1} to see that if $T$ is a transverse unknot and $\self(T)=-2k-1$ for $k>0$, then $T$ has a standard neighborhood $N_s$ if and only if $s<-k-1$, and the standard neighborhood of any such transverse knot is determined up to contact isotopy by the slope on its boundary. 

To see that our claim is true, we suppose $S$ is a solid torus in the knot type of the unknot with boundary having a linear foliation of slope $s<-1$. Then $S'=S^3\setminus S$ is a solid torus. Suppose that $s$ is not an integer for the moment. Then there is a torus $T'$ inside $S'$ and isotopic to $\partial S'$ that is convex with two dividing curves of slope $n= \lceil s\rceil$. This $T$ bounds a solid torus $S_n$ that contains $S$; moreover, by Corollary~\ref{detleg}, $S_n$ is a standard neighborhood of a Legendrian unknot $L'$, and since $L'$ destabilizes to $L$, we know that $S$ is contained in a standard neighborhood of $L$ as claimed. 

Now if $s<-1$ is an integer, then as we have argued above, we can consider a standard neighborhood of $\partial N_s$ to find a new standard neighborhood $N_{s'}$ containing $N_s$ with $s'>s$ not an integer. So this case follows from the non-integer case above. 

Now suppose $T$ is a transverse unknot with $\self(T)=-1$. Since $T$ is the transverse push-off of $L$, we see that $T$ will have standard neighborhoods $N_s$ for any $s<-1$, and they will be unique as above. We are left to see that $T$ also has standard neighborhoods $N_s$ with $s\in[-1,0)$, that they are unique, and that there can be no standard neighborhood with $s\geq 0$. To this end, we recall a construction of the standard contact structure on $S^3$. Consider the manifold $T^2\times [0,1]$ with the contact structure $\ker\left(\cos \frac \pi 2 t\, d\theta + \sin \frac \pi 2 t\, d\phi\right)$, where $\theta$ and $\phi$ are angular coordinates on $T^2$ and $t$ is the coordinate on $[0,1]$. The characteristic foliation on $T^2\times \{0\}$ is by vertical curves (constant $\theta$ curves) and on $T^2\times \{1\}$ by horizontal curves. It is clear that $S^3$ is $T^2\times[0,1]$ with the leaves of the characteristic foliations on $T^2\times\{0\}$ and $T^2\times\{1\}$ collapsed to points (this is just $S^3$ expressed as the join of $S^1$ and $S^1$). Moreover, performing a contact cut \cite{Lerman01} on $T^2\times\{0\}$ and $T^2\times\{1\}$ gives a contact structure on $S^3$. This is the standard contact structure $\xi_{std}$ on $S^3$. The image of $T^2\times\{0\}$ under this collapse is a transverse unknot, and it is easy to see that it bounds a disk containing a single critical point. This implies that $\self=-1$ and thus is $T$. Notice that $T$ has neighborhoods $T^2\times[0, a]$ with a contact cut performed on $T^2\times\{0\}$ and these are standard neighborhoods of $T$ with boundary having linear foliations ranging in $(-\infty, 0)$. 

We know that any standard neighborhood $N_s$ of $T$ with $s<-1$ is unique up to contact isotopy, so now consider the case when $s\in[-1,0)$. If $s\not = -1/n$ for some positive $n$, then let $n$ be the smallest integer such that $s<-1/n$. Considering a neighborhood of $\partial N_s$ we see that $N_s$ is contained in $N_{s'}$ where $s'\in(s,-1/n)$. We can make the boundary of $N_{s'}$ convex with two dividing curves of slope $s'$ and still have this torus contain $N_s$. We can now make the ruling curves on $N_{s'}$ have slope zero. Take a disk in the complement of $N_{s'}$ with boundary a ruling curve. We can make this disk convex, and as it will have more than one dividing curve (since $s'$ is not one over an integer), we can find a bypass for $N_{s'}$. Attaching the bypass, we can find a new torus containing $N_{s'}$ with larger dividing slope. We can continue until $N_{s}$ is contained in a solid torus $N$ with convex boundary having dividing slope $-1/n$. Inside $N_s$ we can find a solid torus $N'$ with convex boundary and dividing slope $-1/(n-1)$. Notice that $N\setminus N'$ is a thickened torus that is a basic slice. Since it is the union of the two thickened tori $N_s\setminus N'$ and $N\setminus N_s$ and the contact structure on $N_s\setminus N'$ is determined (as a subset of a standard neighborhood of a transverse knot), we know the contact structure on $N\setminus N_s$ is uniquely determined (that is there is one possible tight contact structure on this that can be glued to $N_s\setminus N'$ to obtain a tight contact structure). Note also that $S^3\setminus N$ is a solid torus that supports a unique tight contact structure (since its dividing curves are longitudinal with respect to the meridional disk in $S^3\setminus N$). Thus the tight contact structure on $S^3\setminus N_s$ is uniquely determined by $N_s$. So if $N_s$ and $N'_s$ are two standard neighborhoods of $T$, then there is a contactomorphism of $S^3$ taking $N_s$ to $N'_s$. But since the contactomorphism group of $(S^3,\xi_{std})$ is path-connected \cite{Eliashberg92a} $N_s$ is ambient contact isotopic to $N'_s$. 

We end by showing that $T$ cannot have a standard neighborhood $N_s$ with $s\geq 0$. If such a neighborhood existed, then $T$ would have a standard neighborhood $N_0$ as discussed in Section~\ref{nbhds}. But a leaf of the characteristic foliation on $\partial N_0$ bounds a disk and this disk will be an overtwisted disk.
\end{proof}

We now move on to consider transverse knots that do not have unique standard neighborhoods. Specifically, we consider the right-handed trefoil in $(S^3,\xi_{std})$ and prove Theorem~\ref{ex4}.
\begin{proof}[Proof of Theorem~\ref{ex4}]
The right-handed trefoil $K$ is Legendrian simple \cite{EtnyreHonda01b}, and any solid torus in the knot type of the right-handed trefoil with convex boundary having dividing curves of slope less than $0$ can be contained in a standard neighborhood of a Legendrian knot with $\tb=0$, see \cite[Section~3.1]{EtnyreHonda05}. Thus, arguing as in the proof of Theorem~\ref{ex1} we can see that a transverse representative of $K$ with self-linking number $-2k-1$, for $k\geq 0$, has a standard neighborhood $N_s$ if and only if $s<-k$. Moreover, any standard neighborhood of any such transverse representative of $K$ is determined up to contact isotopy by the slope on its boundary. 

We now consider the transverse right-handed trefoil $T$ with $\self=1$. This is the transverse push-off of the Legendrian right-handed trefoil $L$ with $\tb=1$ and $\rot=0$. Thus, we see, by Lemma~\ref{subtori}, that $T$ will have standard neighborhoods $N_s$ for any $s<1$. In \cite{EtnyreHonda05} it was shown that there are no solid tori in the knot type of $K$ with convex boundary having dividing curves of slope greater than $1$. Any standard neighborhood $N_s$ of $T$ with $s\geq 1$ would contradict that. (This is clear for $s>1$, but for $s=1$ we note that a standard neighborhood of $\partial N_1$ would show that $N_1$ is contained in $N_s$ for some $s>1$ so is true for $s=1$ too.) So we know the possible values of $s$ for which $T$ has a standard neighborhood $N_s$. 

Our discussion above shows that any neighborhood $N_s$ of $T$ with $s<0$ is unique up to contact isotopy. To understand the other standard neighborhoods we now recall the classification of solid tori in the knot type of the right-handed trefoil from \cite{EtnyreLafountainTosun12}. There are tori $S_n^\pm$ for integers $n>1$ and $S_1$ in the knot type $K$, where $N_n^\pm$ has convex boundary with two dividing curves of slope $1/n$ and $S_1$ has convex boundary with two dividing curves of slope $1$. All of these tori are distinct up to contact isotopy, any solid torus $S$ in the knot type $K$ is contained in one of these tori, and all of these tori are non-thickenable in the sense that if $S$ is a solid torus with convex boundary containing $S_n^\pm$ then the dividing slope of $\partial S$ is $1/n$ (and similarly for $S_1$). Moreover, if $S$ is a solid torus with convex boundary having dividing slope larger than $0$ and contained in $S_n^\pm$ then $S$ is not contained in any other $S_m^\pm$, but if $\partial S$ has dividing slope less than or equal to $0$ then it can ``thicken'' to $S_1$. The $S_n^-$ and $S_1$ are neighborhoods of $T$ with convex boundary. So if $N_s$ is a standard neighborhood with $s\in[1/(n+1),1/n)$ then it can be a sub-torus of any $S^-_m$ for $m=1,\ldots, n$, and these will all be distinct up to contact isotopy, but if two neighborhoods are in the same $S_m^-$ (with the same slope on the boundary) then they will be isotopic (by the same argument as at the end of the proof of Theorem~\ref{ex3}). 

We finally consider standard neighborhoods $S_0$ of $T$. To this end, we recall how the complement of the tori $S^\pm_n$ can be built. Let $M$ be the result of $0$ surgery on the right-handed torus. This is a torus bundle over the circle. We can think of $M$ as $T^2\times [0,1]$ with the two boundary components glued together by a diffeomorphism of the torus. In \cite{Giroux00, Honda00a} it was shown that $M$ admits a family of tight contact structure $\xi_n$. Each $\xi_n$ is tangent to the $[0,1]$-factor and $\xi_n$ has Giroux torsion $n-1$. We can now consider the Legendrian $L_n$ in $\xi_n$ that is $\{p\}\times [0,1]$ in $M$ where $p$ is a fixed point of the gluing map. The complement $C_n$ of a standard neighborhood of $L_n$ in $(M,\xi_n)$ is contactomorphic to the complement of $S^\pm_n$ in $(S^3,\xi_{std})$. Now consider the transverse push-off $T_n$ of $L_n$. We can consider the compactification $C'_n$ of $M-T_n$ obtained by adding a torus boundary. It is easy to see that the characteristic foliation on $\partial C'_n$ is linear and the leaves are meridians.  
%Note that $A_n=C'_n\setminus C_n$ is a thickened torus with one boundary being convex and the other linearly foliated.
Now consider $N_0^n$ be the standard neighborhood of $T$ in $S^3$ with boundary slope $0$ inside of $N_n^-$. Then $S^3\setminus N_0^n$ is contactomorphic to $C_n'$. We claim that none of the $C_n'$ are contactomorphic. This will show that all the $N_0^n$ are distinct as claimed. We note that $(M,\xi_n)$ is obtained from $C_n'$ by a contact cut (recall, this means collapsing the leaves of the characteristic foliation on $\partial C_n'$, \cite{Lerman01}). If $C_n'$ and $C_m'$ were contactomorphic, the contactomorphism would have to take the torus boundary to the torus boundary and fix the characteristic foliation. Thus, the contact cuts would be contactomorphic, but since none of the $\xi_n$ are contactomorphic, none of the $C_n'$ can be either. 

We finally note that a standard neighborhood $N_s$ with $s\in[1/2,1)$ then $N_s$ can only sit in $N_1$, the standard neighborhood of $L$, and thus it is unique, up to ambient contact isotopy. 
\end{proof}
We note that Theorem~\ref{curous} follows directly from the latter part of the last proof.
\begin{proof}[Proof of Theorem~\ref{curous}]
The contact manifold $C_n'$ in the previous proof are the contact structure claimed in the theorem. 
\end{proof}
\begin{remark}\label{remark}
We note that all positive torus knots have non-thickenable tori \cite{EtnyreLafountainTosun12}, so the arguments in the proof of Theorem~\ref{ex4} above will show that they admit transverse representatives that do not have unique standard neighborhoods. 
\end{remark}

\def\cprime{$'$}

% references
%\bibliography{references}
%\bibliographystyle{plain}
%\bibliographystyle{alpha}
\end{document}